\definecolor{citation}{rgb}{0.2,0.5,0.2}
\definecolor{formula}{rgb}{0.1,0.2,0.5}
\definecolor{url}{rgb}{0,0.2,0.7}
\definecolor{citation}{rgb}{0.2,0.5,0.2}
\definecolor{formula}{rgb}{0.1,0.2,0.5}
\definecolor{url}{rgb}{0,0.2,0.7}
\def\ds{\displaystyle}
\def\l{{\mathcal{L}}}
\def\m{{\mathcal{M}}}
\newcommand{\R}{\mathbb{R}}
\newcommand{\N}{\mathbb{N}}
\newcommand{\D}{\mathrm{d}}
\newcommand{\eps}{\varepsilon}
\renewcommand{\epsilon}{\varepsilon}
\newcommand{\opl}[1]{\l[{#1}]}
\newcommand{\opm}[1]{\m[{#1}]}
\newcommand{\baa}{\begin{array}}
\newcommand{\eaa}{\end{array}}
\newcommand{\ba}{\begin{eqnarray}}
\newcommand{\ea}{\end{eqnarray}}
\newcommand{\be}{\begin{equation}}
\newcommand{\ee}{\end{equation}}
\newtheorem{theorem}{Theorem}[section]
\newtheorem{lemma}[theorem]{Lemma}
\newtheorem{claim}[theorem]{Claim}
\theoremstyle{definition}
\theoremstyle{remark}
\newtheorem{remark}[theorem]{Remark}
\theoremstyle{remark}
\renewcommand{\le}{\leqslant}
\renewcommand{\leq}{\leqslant}
\renewcommand{\ge}{\geqslant}
\renewcommand{\geq}{\geqslant}
\def\ackno#1{\par\medskip{\small\textbf{Acknowledgements.}~#1}}
\newlength{\defbaselineskip}
\title{A Note on Liouville type results for a fractional obstacle problem}
\author[J\'er\^{o}me Coville]{J\'er\^ome Coville}
\address{BioSP, INRA, 84914, Avignon, France}
\thanks{The author has been supported by the ANR DEFI project NONLOCAL (ANR-14-CE25-0013). The author want to thank professor Changfeng Gui for bringing to my attention this question during the Matrix event ``Recent Trends on Nonlinear PDEs of Elliptic and Parabolic Type''. These results have emerged through the scientific discussions during this MATRIX event.}
\begin{document}

\maketitle

\begin{abstract}
This note is a synthesis of my reflexions on some questions  that have emerged  during the MATRIX event ``Recent Trends on Nonlinear PDEs of Elliptic and Parabolic Type'' concerning the qualitative properties of solutions to some non local reaction-diffusion equations of the form
$$
\opl{u}(x)+f(u(x))=0, \quad \text{ for } \quad x\in \overline{\R^n\setminus K},
$$
where  $K\subset\mathbb{R}^N$ is a bounded smooth compact ``obstacle", $\l$ is non local operator  and $f$ is a bistable nonlinearity. When $K$ is convex and  the nonlocal operator $\l$ is  a  continuous  operator of convolution type then some Liouville-type results for solutions satisfying some asymptotic limiting conditions at infinity have been recently established by Brasseur, Coville, Hamel and Valdinoci \cite{Brasseur2016}. Here, we show that for a bounded smooth convex obstacle $K$, similar  Liouville type results hold true  when the operator $\l$ is  the regional s-fractional Laplacian. 
\end{abstract}

\tableofcontents


\section{Introduction}
A classical topic in applied analysis consists in the study of ``diffusive processes'' in complex media e.g. media containing  obstacles. Roughly speaking, this corresponds to study  dispersal processes that follows a random motion in an environment that possess an inaccessible region. At the macroscopic level, this problem can be translated into a reaction - diffusion equation that is defined outside a set~$K$,
which acts as an impenetrable obstacle.

One of the cornerstones in the study of these processes lies in suitable rigidity results of Liouville-type,
which allow the classification of stationary solutions, at least under some geometric assumption on the obstacle~$K$.

In this note, we investigate further  non local version of a diffusion equation and provide new Liouville-type result (whose
precise statements will be given in Section~\ref{MAIN RES}).

Concretely, we will suppose that the random motion  is modelled by  a L\'evy flight which at the macroscopic level, leads to consider an integral operator with a singular positive kernel.  For such type of processes we will show that the solutions of the stationary equation with  a prescribed behaviour at infinity are necessarily constant, at least when the obstacle is convex.

We now provide the detailed mathematical description of the problem that we take into account.


\subsection{A fractional obstacle problem}\label{SE:intro:nonlocal}

Throughout this note, $K$ denotes a smooth compact set of $\R^n$ with $n\ge2$,  $\left|\cdot\right|$ denotes the Euclidean norm in $\R^n$ and  $\l$ denotes the  regional fractional nonlocal  operator (\cite{Guan2005,Guan2006})  defined for $s\in (0,1)$ by
\begin{equation}\label{DEF:L}
\opl{u}(x):=\ds{C_{n,s}\lim_{\epsilon\to 0} \left(\int_{|x-y|>\epsilon,y\in \R^n\setminus K} \,\frac{\big( u(y)-u(x) \big)}{|x-y|^{n+2s}}\,\D y\right)}.
\end{equation}

We are interested in qualitative properties of smooth bounded solutions to the following non local semilinear equation
\begin{align}
&\opl{u}(x)+f(u(x))=0 \quad\mbox{ for all  } x\in \overline{\R^n\setminus K}, \label{EQt}
\end{align}
where  $f$ is a $C^1$ ``bistable" non-linearity and when  necessary  with  the  Neumann boundary condition below  
\begin{align} 
&\nabla u(x)\cdot\nu(x)=0  \qquad\mbox{ for all  } x\in \partial K,  \label{EQtBC}
\end{align}
where $\nu(x)$ denotes the outer normal derivative of the set $K$. The precise assumptions on  $K, u, f$ and $\l$ will be given later on.
Typically, this homogeneous Neumann boundary condition is required to define properly  the regional fractional Laplacian $\l$ on the boundary of the obstacle  when $s\in \left(\frac{1}{2},1\right)$.

This problem may be thought of as a fractional version of the following problem
\be\label{eqlocale}\left\{\begin{array}{rl}
\Delta u+f(u)=0 & \text{in }\R^n\setminus K,\vspace{3pt}\\
\nabla u\cdot\nu=0 & \text{on }\partial K.\vspace{3pt}
\end{array}\right.
\ee
For problem~\eqref{eqlocale} with the local diffusion operator $\Delta u$, it was shown in~\cite{Berestycki2009} that there
exist a time-global classical solution $u(t,x)$ to the parabolic problem
\be\label{EQ:BHM}\left\{\baa{rcll}
\ds\frac{\partial u}{\partial t} & \!\!=\!\! & \Delta u+f(u) & \text{in }\R\times\overline{\R^n\setminus K},\vspace{3pt}\\
\nabla u\cdot\nu & \!\!=\!\! & 0 & \text{on }\R\times\partial K\eaa\right.
\ee
satisfying $0<u(t,x)<1$ for all $(t,x)\in\R\times\overline{\R^n\setminus K}$,
and a classical solution $u_\infty(x)$ to the elliptic problem
\begin{align}
\left\{
\begin{array}{rl}
\Delta u_\infty+f(u_\infty)=0 & \text{in }\overline{\R^n\setminus K},\vspace{3pt}\\
\nabla u_\infty\cdot\nu=0 & \text{on }\partial K, \vspace{3pt}\\
0\le u_\infty\leq 1 & \text{in }\overline{\R^n\setminus K}, \vspace{3pt}\\
u_\infty(x)\to1 & \text{as }|x|\to+\infty.
\end{array} \label{EQ:BHM0}
\right.
\end{align}
The function $u_{\infty}$ is a stationary solution of~\eqref{EQ:BHM} and it is actually obtained as the large time limit of $u(t,x)$, in the sense that $u(t,x)\to u_\infty(x)$ as $t\to+\infty$ locally uniformly in $x\in\overline{\R^n\setminus K}$. Under some geometric conditions on $K$ (e.g. if $K$ is starshaped or directionally convex, see~\cite{Berestycki2009} for precise assumptions) it is shown in~\cite[Theorems~6.1 and~6.4]{Berestycki2009} that solutions to~\eqref{EQ:BHM0} are actually identically equal to~$1$ in the whole set $\overline{\R^n\setminus K}$. This Liouville property shows that the solutions $u(t,x)$ of~\eqref{EQ:BHM} constructed in~\cite{Berestycki2009} then satisfy
\begin{align}
u(t,x)\underset{t\to+\infty}{\longrightarrow} 1 \qquad{\mbox{ locally uniformly in }}x\in\overline{\R^n\setminus K}. \label{longtime}
\end{align}

From an ecological point of view, such a results can be interpreted as follows. Let us consider that $u(t,x)$ represents the density of a population that moves according to a Brownian motion in a environment consisting of the whole space~$\R^n$ with a compact obstacle $K$ and that the demography of this population can be described by the nonlinear function $f$.  Then, the equation ~\eqref{EQ:BHM} can be understood as the evolution of this population int the region $\R^n\setminus K$. In this context,~\eqref{longtime} means that, at large time, the population tends to occupy the whole space.

Assuming now that the random movement  of the individuals follows, say, a reflected symmetric $\alpha$-stable  L\'evy process, then the resulting reaction-diffusion equation will be 
\begin{align}
&\frac{\partial u}{\partial t}(t,x)=\opl{u}(t,x)+f(u(t,x)) \quad \text{ for all }\quad t>0, x\in \R^n\setminus K   \label{NONLOCAL:EVOL:EQ}\\
& + \text{A Neumann type boundary condition when necessary}\nonumber
\end{align}
where$\l$ is the regional fractional Laplacian defined above. 

The  numerical simulations below (see Figure \ref{fig-obs}) obtained for a non singular version of the  fractional operator $\l$ namely when we replace the singular measure $\frac{1}{|z|^{n+2s}}$ by $\frac{1}{\delta+|z|^{n+2s}}$ for $\delta<<1$,  suggest that the long time  behaviour of solution of positive equation \eqref{NONLOCAL:EVOL:EQ} should be identical as those observed in the classical reaction-diffusion equation. 

\begin{figure}[h!]
 \centering
\subfloat[$t=0$]{\includegraphics[width=4.2cm, height=2.5cm]{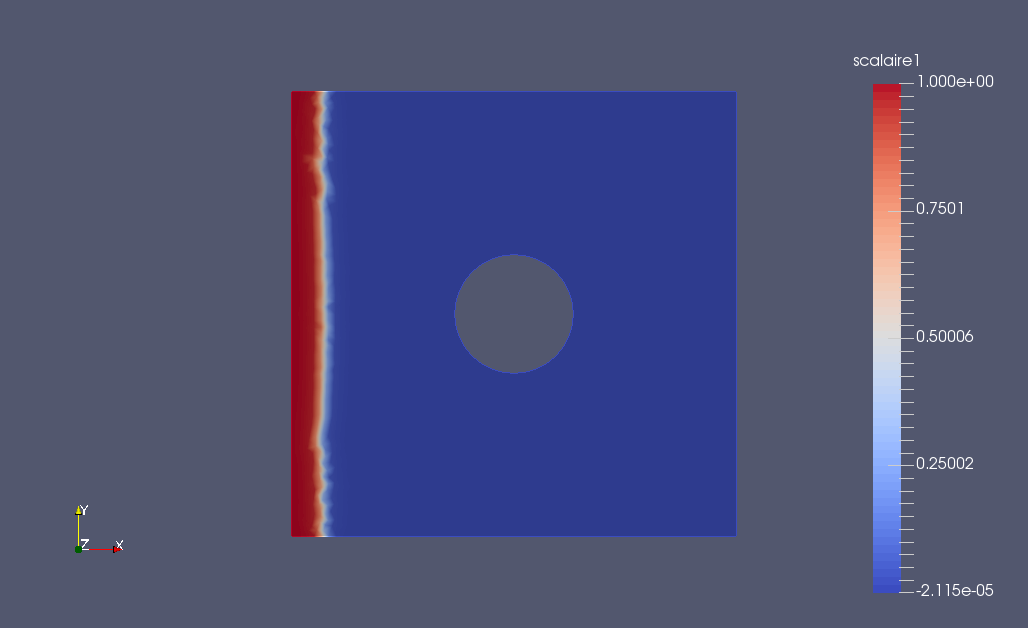}}
\subfloat[$t=40$]{\includegraphics[width=4.2cm, height=2.5cm]{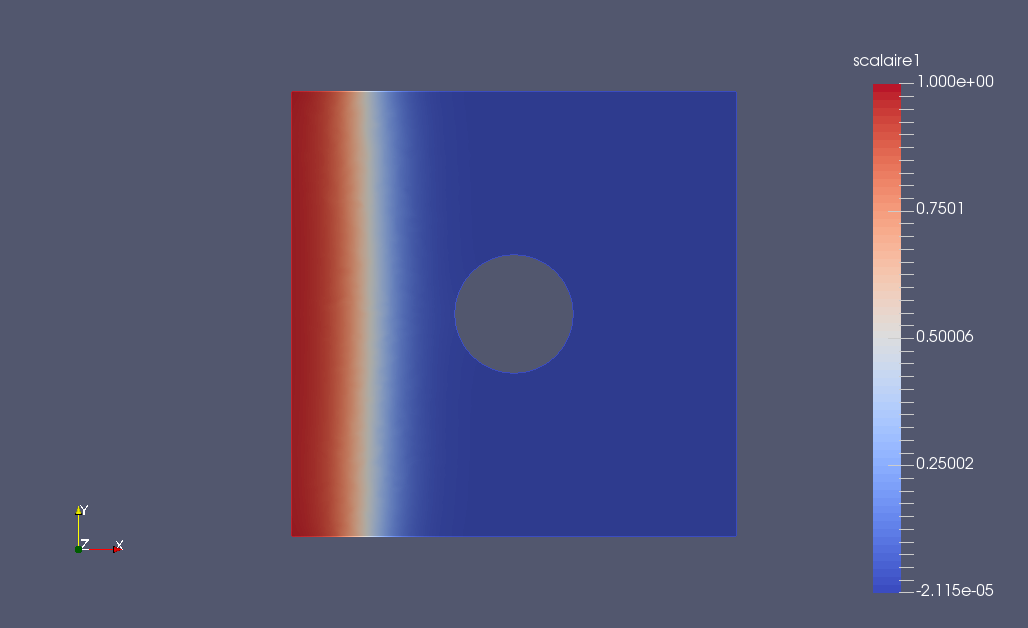}}
 \subfloat[$t=80$]{\includegraphics[width=4.2cm, height=2.5cm]{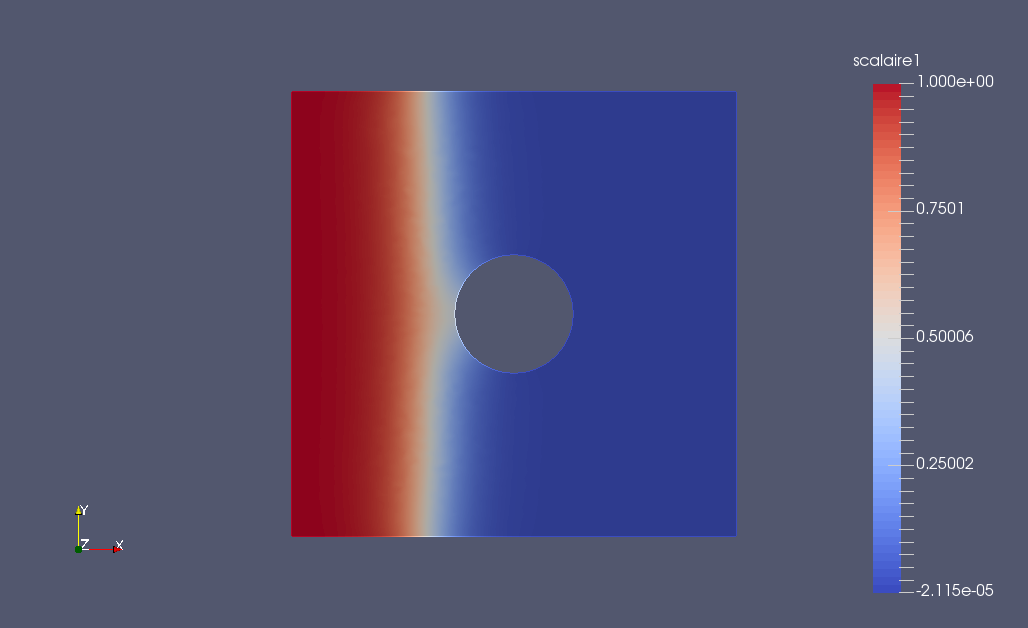}}
 \subfloat[$t=120$]{\includegraphics[width=4.2cm, height=2.5cm]{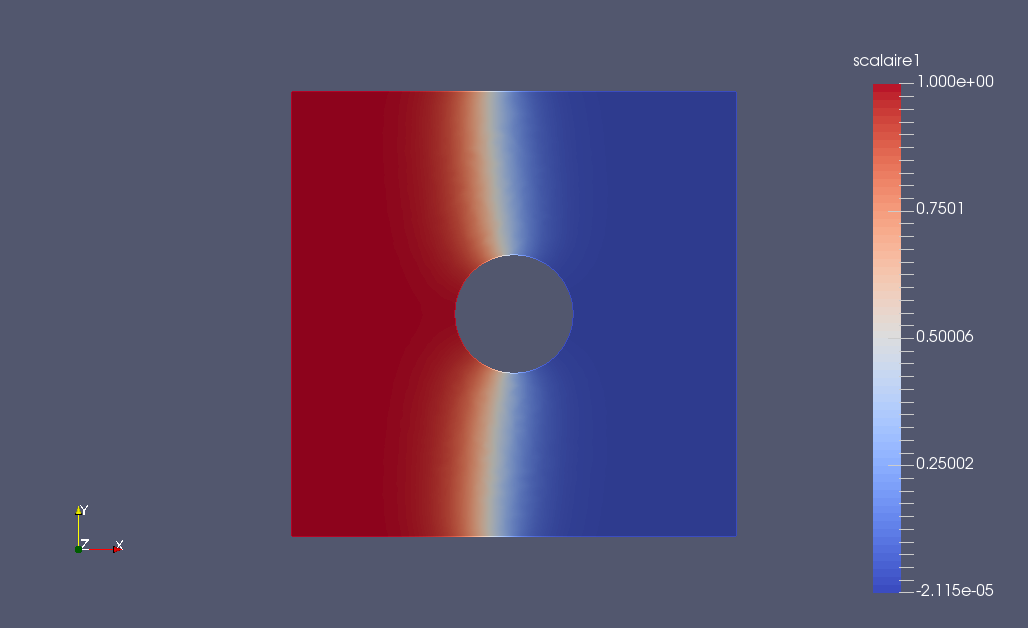}}
 
 \subfloat[$t=160$]{\includegraphics[width=4.2cm, height=2.5cm]{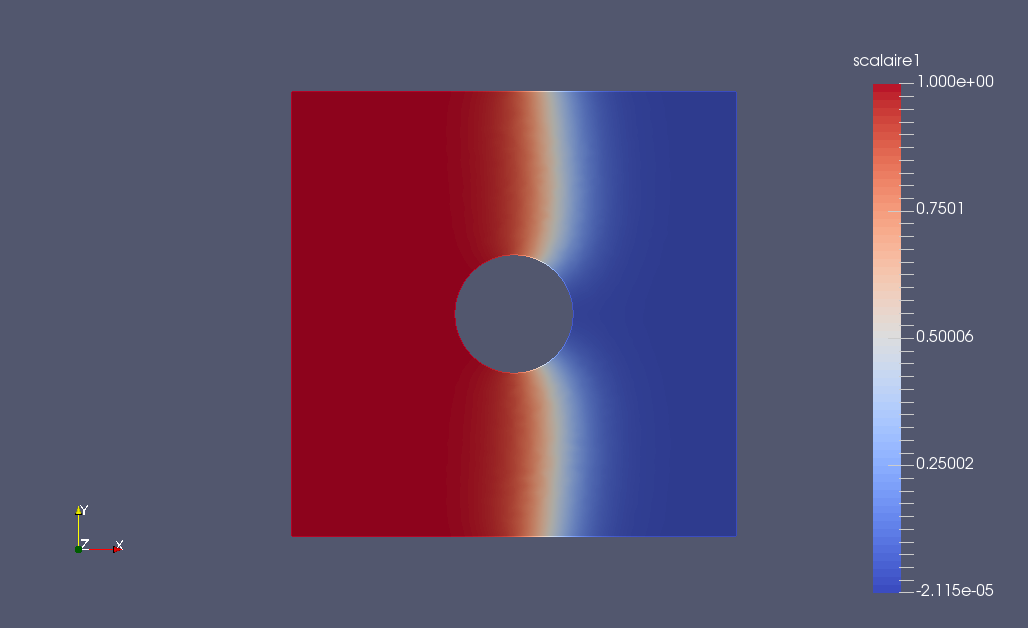}}
 \subfloat[$t=200$]{\includegraphics[width=4.2cm, height=2.5cm]{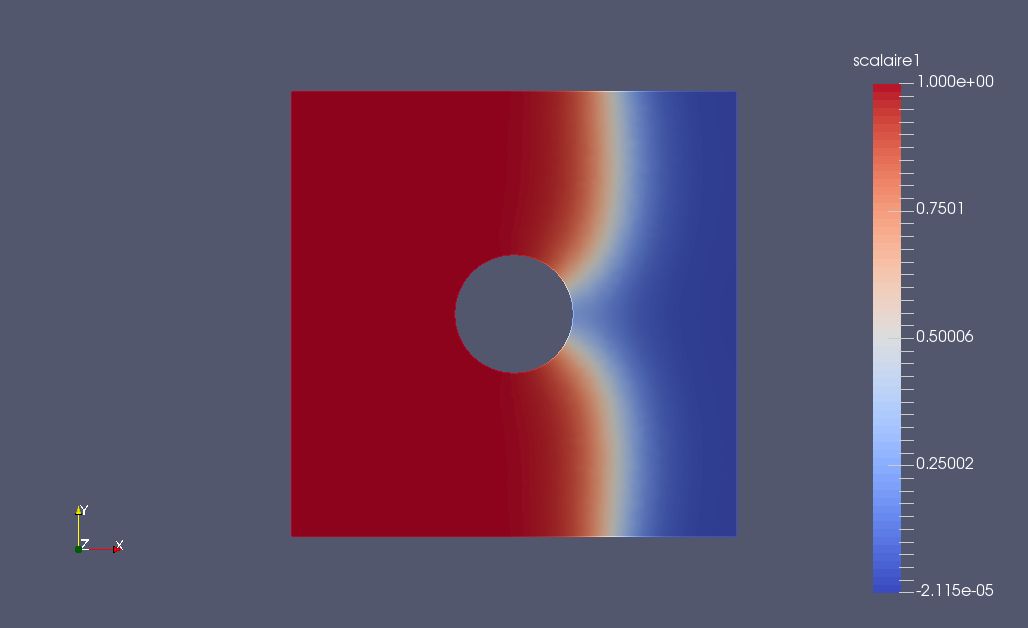}}
 \subfloat[$t=240$]{\includegraphics[width=4.2cm, height=2.5cm]{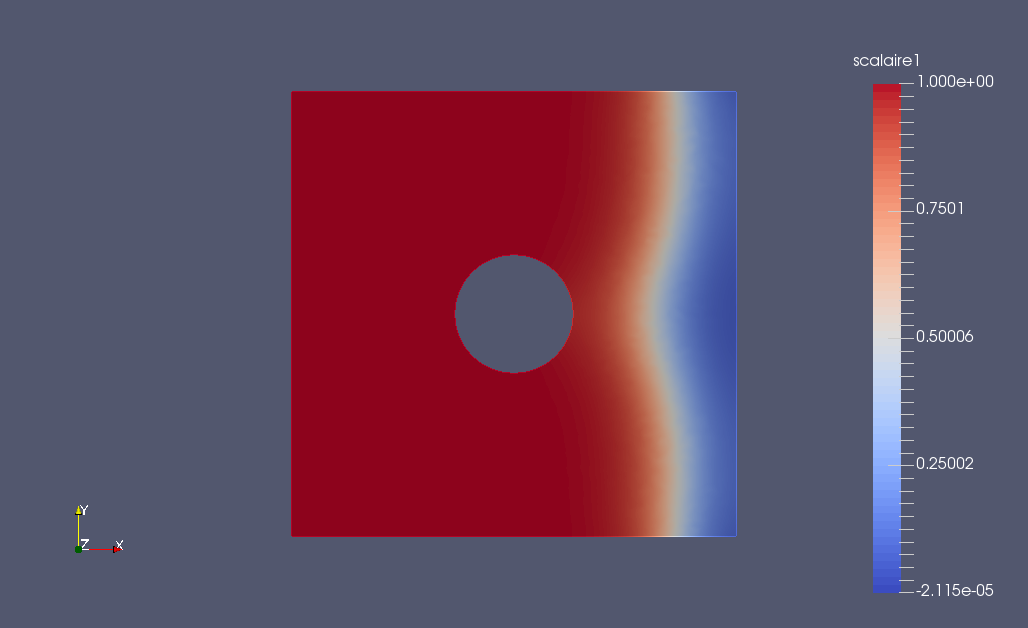}}
 \subfloat[$t=280$]{\includegraphics[width=4.2cm, height=2.5cm]{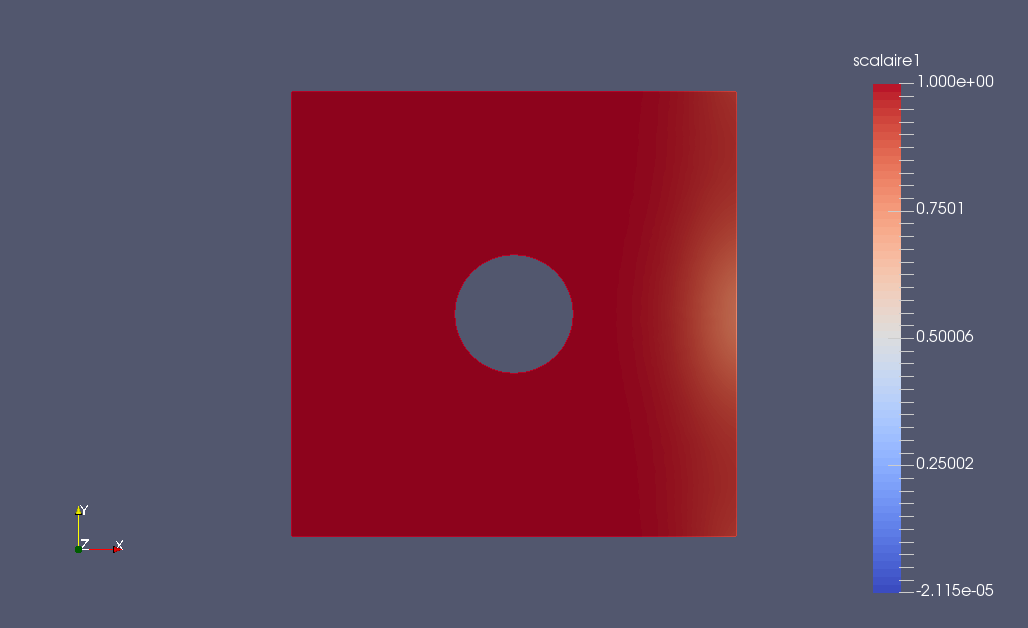}}
 \caption{Simulation of the singular non local evolution problem \eqref{NONLOCAL:EVOL:EQ} where the singular Levy kernel has been replaced  by a non singular measure $\frac{1}{\delta+|z|^{n+2s}}$ with $\delta=0.01$, the bistable non-lineartity $f$ is a cubic non-linearity $f(s)=s(s-0.1)(1-s)$ and the initial condition is of Heaviside type. We can see that the density $u(t,x)$ tends to 1 on the all space and  the influence of the obstacle on the shape of the transition}
 \label{fig-obs}
 \end{figure}

In this note, we deal with qualitative properties of the stationary solutions of equation~\eqref{NONLOCAL:EVOL:EQ}, together with some asymptotic limiting conditions at infinity similar to those appearing in~\eqref{EQ:BHM0}. Namely, we will be mainly concerned with  solutions of 
\begin{align}
\left\{
\begin{array}{rl}
\opl{u}+f(u)=0 & \text{in }\R^n\setminus K,\vspace{3pt}\\
0\leq u\leq 1 & \text{in }\R^n\setminus K, \vspace{3pt}\\
u(x)\to1 & \text{as }|x|\to+\infty.
\end{array} \label{EQ:BHM1}
\right.
\end{align}

with, when necessary,  the additional homogeneous Neumann boundary condition
\begin{equation}\label{EQ:BC}
\nabla u\cdot\nu=0 \qquad \text{ on }\qquad\partial K.
\end{equation}

\subsection{General assumptions}\label{SE:assumptions}
Let us now state precisely the  assumptions we will use. Along this note, we suppose that the  domain $K$ is a smooth (at least  $C^{0,1}$) bounded compact domain of $\R^n$, and that   $f$ is a smooth bistable non-linearity, that is $f$ will always satisfies

\be\label{C1}
f\in C^1([0,1]),\ \ f(0)=0=f(1)\ge 0,\ \ f'(1)<0,
\ee
\be\label{C2}
\left\{
\begin{array}{l}
\exists\,\theta\in(0,1),\ \ f(0)=f(\theta)=f(1)=0,\ \ f<0\hbox{ in }(0,\theta),\ \ f>0\hbox{ in }(\theta,1),\vspace{3pt}\\
\displaystyle\int_{0}^1f>0,\ \ f'(0)<0,\ \ f'(\theta)>0.\ \ \vspace{3pt}\\
\end{array}
\right.
\ee

Observe that the assumption on $f$ implies that the associated potential  is  unbalanced which is a necessary condition to observe the propagation of a front with a positive speed \cite{Achleitner2015,Gui2015a}. Thus it seems reasonable to assume such condition in our setting since we expect that the  solution $u$ of \eqref{EQ:BHM1} reflect the outcome of the invasion of the population  in the environment $\R^n\setminus K$.

\section{Main results}\label{MAIN RES}

For the local problem~\eqref{EQ:BHM0}, the  Liouville property obtained in \cite{Berestycki2009} says that $u=1$ in $\overline{\R^n\setminus K}$ under some geometric conditions on $K$,in particular when $K$ is convex.
Similar Liouville type property was recently obtained for continuous solution of \eqref{EQ:BHM1} when the singular kernel $\frac{1}{|z|^{n+2s}}$ is replaced by a non negative integrable  kernel $J$, i.e.  $J\in L^1(\R)$, see \cite{Brasseur2016}. 
More precisely, if  $J$ is assume to satisfy the assumptions below

\be\label{C4}\left\{\baa{l}
J\in L^1(\R^n)\hbox{ is a non-negative, radially symmetric kernel with unit mass},\vspace{3pt}\\
\hbox{there are }0\le r_1<r_2\hbox{ such that }J(x)>0\hbox{ for a.e. }x\hbox{ with }r_1<|x|<r_2,\eaa\right.
\ee
and there exists a function $\phi\in C(\R)$ satisfying
\be\label{C5}
\left\{
\begin{array}{l}
J_1\ast \phi-\phi+f(\phi)\geq 0\text{ in }\R, \vspace{3pt}\\
\phi\hbox{ is increasing in }\R,\ \ \phi(-\infty)=0,\ \ \phi(+\infty)=1,
\end{array}
\right.
\ee
where $J_1\in L^1(\R)$ is the non-negative even function with unit mass given for a.e. $x\in\R$ by
$$J_1(x):=\int_{\R^{n-1}} J(x,y_2,\cdots,y_n)\,\D y_2\cdots\D y_n.$$
 then in \cite{Brasseur2016} the authors prove the following

\begin{theorem}[Brasseur, Coville, Hamel,Valdinoci \cite{Brasseur2016}]\label{TH:JLIOUVILLE}
Let~~$K\subset\R^n$ be a compact convex set. Assume that  $f$ satisfies \eqref{C1} and \eqref{C2} and $J$ satisfies \eqref{C4} and \eqref{C5} and let
\begin{align}
u\in C(\overline{\R^n\setminus K},[0,1]) \label{continue}
\end{align}
be a function satisfying
\begin{equation}\label{nonlocJ}
\begin{cases}
&\ds{\int_{\R^n\setminus K}J(x-y)(u(y)-u(x))\,dy} +f(u(x))\leq 0  \quad \text{ for }\, x\in \overline{\R^n\setminus K}, \vspace{3pt}\\
&u(x)\to1  \quad \text{ as }\,|x|\to+\infty.
\end{cases}
\end{equation}
Then, $u=1$ in $\overline{\R^n\setminus K}$.
\end{theorem}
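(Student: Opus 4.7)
My plan is to establish the result by a sliding method based on the one-dimensional profile $\phi$ from \eqref{C5}, exploiting convexity of $K$ through its support function. For each direction $e \in S^{n-1}$, set $T(e) := \sup_{y \in K} e \cdot y$ and define the open half-space
$$H_e := \{x \in \R^n : e \cdot x > T(e)\}.$$
Convexity of $K$ gives $H_e \cap K = \emptyset$ and, by the hyperplane separation theorem, $\overline{\R^n \setminus K} = \bigcup_{e \in S^{n-1}} \overline{H_e}$, so it is enough to prove $u \equiv 1$ on $\overline{H_e}$ for every $e$.

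First I would build planar sub-solutions from $\phi$. For each $e$ and $\tau \in \R$, set $v_{e,\tau}(x) := \phi(e \cdot x - \tau)$. By radial symmetry of $J$ and Fubini (identifying the marginal of $J$ in direction $e$ with $J_1$), the $n$-dimensional convolution reduces to one dimension, so \eqref{C5} gives
$$\int_{\R^n} J(x-y)\bigl(v_{e,\tau}(y) - v_{e,\tau}(x)\bigr)\,dy + f(v_{e,\tau}(x)) = (J_1 \ast \phi - \phi + f(\phi))(e \cdot x - \tau) \geq 0.$$
For $x \in H_e$ and $y \in K$ one has $e \cdot y \leq T(e) < e \cdot x$, so by monotonicity of $\phi$, $v_{e,\tau}(y) \leq v_{e,\tau}(x)$, and deleting the $K$-part of the integral only strengthens the inequality. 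Hence $v_{e,\tau}$ is a sub-solution of the restricted equation on $H_e$, while $u$ is a super-solution on $\overline{\R^n \setminus K}$.

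Next comes the sliding step. Fix $e$ and set
$$\tau^* := \inf\bigl\{\tau \in \R :\ v_{e,\tau}(x) \leq u(x) \text{ for all } x \in \overline{H_e}\bigr\};$$
the admissible set is an interval $[\tau^*, +\infty)$ since $v_{e,\tau}$ is decreasing in $\tau$. I would first show $\tau^* < +\infty$ (initialization): using $u \to 1$ at infinity and the pointwise convergence $v_{e,\tau} \to 0$ as $\tau \to +\infty$, together with a barrier argument in the far-$e$ direction exploiting $f'(1) < 0$, one obtains $v_{e,\tau} \leq u$ on $\overline{H_e}$ once $\tau$ is large enough. Then I would prove $\tau^* = -\infty$ by contradiction: if $\tau^* > -\infty$, the non-negative function $w := u - v_{e,\tau^*}$ either vanishes at some interior point $x_0 \in \overline{H_e}$ or tends to zero along a sequence going to infinity. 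In the interior case, subtracting the inequalities and applying the mean value theorem to $f$ gives
$$\int_{\R^n \setminus K} J(x-y)\bigl(w(y) - w(x)\bigr)\,dy + c(x)\,w(x) \leq 0,$$
with $c$ bounded, and a strong maximum principle based on the annulus-positivity of $J$ from \eqref{C4} propagates $w \equiv 0$ through $\overline{H_e}$; this contradicts $u \to 1$ combined with the strict monotonicity of $\phi$. The at-infinity case is ruled out by an analogous barrier argument exploiting $f'(1) < 0$. Once $\tau^* = -\infty$, sending $\tau \to -\infty$ in $v_{e,\tau} \leq u$ yields $u \geq 1$, hence $u \equiv 1$, on $\overline{H_e}$, and varying $e$ completes the proof.

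The hard part will be the sliding step itself --- both the initialization and the elimination of asymptotic contact at infinity. The difficulty is that $v_{e,\tau}(x) \to 1$ as $e \cdot x \to +\infty$ independently of $\tau$, while $u$ is only assumed to approach $1$ at infinity without any controlled rate, so in the far-$e$ direction both $u$ and $v_{e,\tau}$ sit near $1$ and the strict comparison $v \leq u$ must be extracted from finer information provided by the stability assumption $f'(1) < 0$. The annulus-positivity of $J$ from \eqref{C4} is likewise essential for running the strong maximum principle on the linearized nonlocal operator.
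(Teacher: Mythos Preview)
Your overall strategy—building planar sub-solutions $v_{e,\tau}(x)=\phi(e\cdot x-\tau)$ from the one-dimensional profile in \eqref{C5} and sliding them below $u$—is precisely the method of \cite{Brasseur2016} (reproduced in this note for the fractional analogue in Section~\ref{8}). The sub-solution computation on $H_e$ via monotonicity of $\phi$, and your identification of the delicate points (initialization and asymptotic contact, both handled through $f'(1)<0$), are correct.

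There is, however, a real gap in how you set up the sliding. You define $\tau^*$ by requiring $v_{e,\tau}\le u$ only on $\overline{H_e}$. But the operator is nonlocal: at a contact point $\bar x\in\overline{H_e}$ the linearised inequality you write reads
\[
\int_{\R^n\setminus K}J(\bar x-y)\,w(y)\,\D y\le 0,
\]
and the integration extends over all of $\R^n\setminus K$, including $H_e^c\setminus K$. Your hypothesis gives $w\ge 0$ only on $\overline{H_e}$; on $H_e^c\setminus K$ nothing prevents $u$ from dipping below $v_{e,\tau^*}$ (note $v_{e,\tau^*}$ is \emph{not} a sub-solution there, since for $x\in H_e^c\setminus K$ and $y\in K$ the sign of $\phi(e\cdot y-\tau)-\phi(e\cdot x-\tau)$ is uncontrolled). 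Hence the strong maximum principle cannot be invoked, and $w\equiv 0$ on $\overline{H_e}$ does not follow.

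The remedy—and this is what the paper does—is to slide globally: define $r^*:=\inf\{r:\ \phi_r\le u\ \text{in}\ \overline{\R^n\setminus K}\}$. The initialization still works, because on the complement $H^c\setminus K$ of a far half-space $H$ in direction $e$ the quantity $e\cdot x$ is bounded above, so $\phi_r\to 0$ uniformly there as $r\to+\infty$; combined with the preliminary lower bound $u\ge\gamma>0$ (Lemma~\ref{CLAIM0}, which you also need but did not state) and the weak maximum principle on $H$, one gets $\phi_r\le u$ everywhere for large $r$. At $r^*$ one then has $w\ge 0$ on all of $\R^n\setminus K$, and the strong maximum principle on $H_e$ is legitimate. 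The paper further streamlines the argument by working with a \emph{single} direction $e$, chosen so that the global minimum point $x_0$ of $u$ lies on $\partial H_e$; this forces any contact point to lie in $\overline{H_e}$ and yields the contradiction $u(x_0)\ge\lim_{r\to-\infty}\phi_r(x_0)=1$. Your covering $\overline{\R^n\setminus K}=\bigcup_e\overline{H_e}$ is then unnecessary.
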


Observe that the problems \eqref{nonlocJ} and \eqref{EQ:BHM1} only differ in their formulation by the singularity of the kernels used. In particular, the problem \eqref{nonlocJ}  can be reformulated in to the framework  of problem \eqref{EQ:BHM1} since for all $J\in L^1(\R^n)$ and for all  $x \in \overline{\R^n\setminus K}$ and $u\in L^{\infty}(\R^n)$
$$\lim_{\eps\to }\int_{\R^n\setminus K,|x-y|>\eps}J(x-y)(u(y)-u(x))\,dy= \int_{\R^n\setminus K}J(x-y)(u(y)-u(x))\,dy.$$

Therefore, it is expected that~\eqref{nonlocJ} and~\eqref{EQ:BHM1} share some common properties. On of the goals of the present note is to extend the results known for ~\eqref{nonlocJ} to the solutions of \eqref{EQ:BHM1} and when possible to highlight the role of the singularity of the kernel in this context.

Our main results show that under the right regularity assumptions  we can transposed the results  of theorem \ref{TH:JLIOUVILLE} to  solutions to \eqref{EQ:BHM1}.  Namely, we first  prove that    

\begin{theorem}\label{TH:LIOUVILLE}
Let~~$K\subset\R^n$ be a compact smooth convex set ($C^{0,1}$). Assume~\eqref{C1},~\eqref{C2} and  $s \in (0,\frac{1}{2})$. Let $u \in C^{0,\beta}(\R^n\setminus K,[0,1])$ with $\beta>2s$ be a function satisfying
\begin{align}
\left\{
\begin{array}{rl}
\opl{u}+f(u)\leq 0  & \text{ in }\,\R^n\setminus K, \vspace{3pt}\\
u(x)\to 1  & \text{ as }\,|x|\to+\infty.
\end{array}
\right.  \label{LIM:u}
\end{align}

Then, $u=1$ in $\overline{\R^n\setminus K}$.
\end{theorem}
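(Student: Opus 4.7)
I would argue by contradiction: suppose $m := \inf_{\overline{\R^n\setminus K}} u < 1$. Since $u$ is continuous and $u(x) \to 1$ at infinity, the infimum is attained at some finite point $x_0 \in \overline{\R^n\setminus K}$. The regularity $u \in C^{0,\beta}$ with $\beta > 2s$ ensures that $\opl{u}(x)$ is defined pointwise as an absolutely convergent integral (in particular also when $x_0 \in \partial K$, where $s < 1/2$ is precisely what makes this work without imposing a Neumann-type condition).

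At such a minimum one has
\[
\opl{u}(x_0) = C_{n,s}\int_{\R^n\setminus K}\frac{u(y)-m}{|x_0-y|^{n+2s}}\,\D y \geq 0,
\]
with strict inequality: because $u(y)\to 1>m$ at infinity, the nonnegative integrand is strictly positive on an unbounded set, and the integral converges thanks to the Hölder assumption. Combined with $\opl{u}(x_0)+f(m)\leq 0$, this forces $f(m)<0$, hence by \eqref{C2} we must have $m\in(0,\theta)$.

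To upgrade this partial conclusion to $m=1$, I would run a sliding argument in the spirit of \cite{Brasseur2016}, using the convexity of $K$ together with an auxiliary one-dimensional increasing profile $\phi:\R\to[0,1]$, $\phi(-\infty)=0$, $\phi(+\infty)=1$, built from the 1D bistable traveling wave for the fractional Laplacian with the same $f$ (available for $s\in(0,1/2)$ via \cite{Achleitner2015,Gui2015a}). For each $e\in \S^{n-1}$ set $c_e:=\sup_{y\in K} y\cdot e$, $H_e:=\{x:x\cdot e>c_e\}$, and $\Phi_\tau(x):=\phi(x\cdot e-\tau)$. Decomposing $\opl{\Phi_\tau}$ as the full $n$-dimensional fractional Laplacian of $\Phi_\tau$ plus a correction $C_{n,s}\int_K(\Phi_\tau(x)-\Phi_\tau(y))/|x-y|^{n+2s}\,\D y$, the convexity of $K$ (so that $y\cdot e\leq c_e\leq x\cdot e$ for all $x\in H_e$ and $y\in K$) together with the monotonicity of $\phi$ makes this correction nonnegative, giving $\Phi_\tau$ the right sign to serve as a comparison function for $u$ on $H_e$. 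I would then slide $\tau$ downward from $+\infty$, where $\Phi_\tau\leq u$ on $H_e$ trivially, and rule out a finite critical value $\tau^*$ by a strong nonlocal comparison principle: any contact $u(x^*)=\Phi_{\tau^*}(x^*)$ would force $u\equiv\Phi_{\tau^*}$ on $H_e$, contradicting the limit $u\to 1$ along rays transverse to $e$. Hence $u\geq \phi(x\cdot e-\tau)$ on $H_e$ for all $\tau\in\R$; letting $\tau\to-\infty$ gives $u\geq 1$ on $H_e$, and varying $e$ covers all of $\R^n\setminus K$ by convexity of $K$, so $u\equiv 1$, the desired contradiction.

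\textbf{Main obstacle.} The technical heart of the argument is twofold: (i) establishing a strong nonlocal comparison principle for the regional fractional Laplacian on the half-spaces $H_e$ that meet $K$ only along $\partial K$, so that pointwise contact in the sliding upgrades to full identification; and (ii) pinning down the correct sub/supersolution status of the 1D front $\phi$, since for unbalanced bistable $f$ the front is a traveling wave with nonzero speed, and one must carefully track the contribution of the speed term when evaluating $\opl{\Phi_\tau}+f(\Phi_\tau)$ on $H_e$ — possibly by replacing $\phi$ with a time-shifted or mollified version. Treating the boundary case $x_0\in\partial K$ in the initial max-principle step also requires the regularity $\beta>2s$ to guarantee convergence of the singular integral defining $\opl{u}(x_0)$.
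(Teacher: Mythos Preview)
Your overall strategy---contradiction at a minimum point, then a sliding comparison with the one–dimensional fractional bistable front---is exactly the paper's. But the sliding as you set it up has a real gap. You track $\Phi_\tau\le u$ only on the half-space $H_e$ and appeal to a ``strong nonlocal comparison principle'' at a contact point $x^*\in H_e$. For the regional operator this is not enough: at $x^*$ the integral $\opl{u-\Phi_{\tau^*}}(x^*)$ sees all of $\R^n\setminus K$, and without knowing $u\ge\Phi_{\tau^*}$ on $H_e^c\setminus K$ you cannot conclude that this integral is nonnegative (cf.\ the hypothesis $v\le u$ \emph{a.e.\ in $\R^n\setminus K$} in Lemma~\ref{STRONG:LE23}). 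Relatedly, sweeping over \emph{all} directions $e$ at the end does not rescue the argument, because for a generic $e$ the first contact in the sliding may well occur at a point of $H_e^c\setminus K$, where $\Phi_\tau$ is not a subsolution and no contradiction is available.

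The paper repairs exactly this by tying the direction $e$ to the minimum point $x_0$: one takes $H_e=x_0+\{x\cdot e>0\}$ with $K\subset H_e^c$, and one slides over the set of $r$ for which $\phi_r\le u$ on \emph{all} of $\overline{\R^n\setminus K}$ (the initial ordering for $r_0\gg1$ is obtained not ``trivially'' but via the weak maximum principle, Lemma~\ref{WEAK}, using $u\ge 1-c_0$ far out). At the critical $r^*$, if the contact point $\bar x$ fell in the interior of $H_e^c$ one would have $\phi_{r^*}(\bar x)<\phi_{r^*}(x_0)\le u(x_0)=\min u$, a contradiction; hence $\bar x\in\overline{H_e}$ and the strong principle applies. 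For $s\in(0,\tfrac12)$ the operator is defined up to $\partial K$, so the case $\bar x\in\partial K$ needs no separate treatment. Finally, your worry (ii) about the speed term is misplaced: the unbalanced assumption $\int_0^1 f>0$ gives $c>0$, and since $\varphi'>0$ one gets directly $\opl{\Phi_\tau}+f(\Phi_\tau)\ge c\,\varphi'>0$ on $H_e$ (see \eqref{eq:subsol}); no mollification or time-shift is needed.
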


Our second result complete the picture, namely we show that

\begin{theorem}\label{TH2:LIOUVILLE}
Let~~$K\subset\R^n$ be a compact smooth convex set ($C^{0,1}$). Assume~\eqref{C1},~\eqref{C2} and  $s \in \left[\frac{1}{2},1\right)$. Let $u \in C^{1,\beta}(\R^n\setminus K,[0,1])$ with $\beta>2s-1$
be a function satisfying
\begin{align}
\left\{
\begin{array}{rl}
\opl{u}+f(u)\leq 0  & \text{ in }\,\R^n\setminus K, \vspace{3pt}\\
\nabla u\cdot\nu=0 & \text{ on }\,\partial K,\vspace{3pt}\\
u(x)\to 1  & \text{ as }\,|x|\to+\infty.
\end{array}
\right.  \label{LIM2:u}
\end{align}

Then, $u=1$ in $\overline{\R^n\setminus K}$.
\end{theorem}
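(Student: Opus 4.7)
The plan is to run the same contradiction scheme as for Theorem~\ref{TH:LIOUVILLE}, with additional care at $\partial K$: the Neumann condition and the improved regularity $u\in C^{1,\beta}$ with $\beta>2s-1$ are exactly what is needed to give pointwise sense to $\opl{u}$ on $\partial K$. So I would assume, toward a contradiction, that $m:=\inf_{\overline{\R^n\setminus K}}u<1$.

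First I would show that $m$ is attained. Since $u\to 1$ at infinity and $u$ is continuous, every minimizing sequence stays in a bounded set, so there exists $x_0\in\overline{\R^n\setminus K}$ with $u(x_0)=m$. Then I would argue that $\opl{u}(x_0)>0$. Since $u(y)\ge m$ for every $y\in\overline{\R^n\setminus K}$ and $u(y)\to 1>m$ as $|y|\to+\infty$, the integrand $(u(y)-m)/|x_0-y|^{n+2s}$ is nonnegative and bounded below by a positive function on a set of positive measure. The only delicate issue is to make the integral converge near $y=x_0$. If $x_0$ lies in the interior, $u\in C^{1,\beta}$ with $\beta>2s-1$ gives the needed cancellation of the principal value. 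If $x_0\in\partial K$, I would Taylor-expand
\[
u(y)-u(x_0)=\nabla u(x_0)\cdot(y-x_0)+O(|y-x_0|^{1+\beta}),
\]
and split $y-x_0$ into normal and tangential components relative to $\partial K$ at $x_0$. The Neumann condition $\nabla u(x_0)\cdot\nu(x_0)=0$ kills the normal part, while the tangential component has zero principal value by symmetry of the kernel against the half-space tangent to $\partial K$ at $x_0$ (using convexity of $K$). The remainder is $O(|y-x_0|^{1+\beta-n-2s})$ which is integrable near $x_0$ since $1+\beta>2s$. Combining this with $\opl{u}(x_0)+f(m)\le 0$ yields $f(m)<0$, hence $m\in(0,\theta)$.

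The main obstacle is to exclude $m\in(0,\theta)$, since the maximum principle alone does not give a contradiction in this range of $f$. Here I would use the geometry: by convexity of $K$, choose a direction $e$ such that the ``enveloping'' half-space $\{x\cdot e>\alpha\}$ is contained in $\R^n\setminus K$ for some $\alpha$. On this half-space one can run a sliding argument, comparing $u$ with translates $\phi(x\cdot e-\tau)$ of a fixed monotone profile $\phi:\R\to[0,1]$ with $\phi(-\infty)=0$, $\phi(+\infty)=1$ obtained from a one-dimensional subsolution of the fractional problem (built e.g.\ as in Achleitner--Kuehn or Gui--Zhao, exploiting $\int_0^1 f>0$). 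Starting with $\tau$ so large that $\phi(\cdot-\tau)<m\le u$, decrease $\tau$ until first contact. Convexity of $K$ guarantees that the contact point is either in the interior of $\R^n\setminus K$ or on $\partial K$, and in the latter case the Neumann condition together with the convex shape of $K$ yields a Hopf-type obstruction.

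At the contact point the strong maximum principle for the regional fractional Laplacian (using the same integral representation as above: if $\opl{w}(x_*)\le 0$ at an interior minimum of $w\ge 0$, then $w\equiv w(x_*)$ on $\overline{\R^n\setminus K}$) forces $u$ to coincide with a translate of $\phi$ on $\overline{\R^n\setminus K}$. This is impossible because $u\to 1$ uniformly at infinity while $\phi$ tends to $0$ in one direction. The contradiction gives $m=1$, hence $u\equiv 1$ on $\overline{\R^n\setminus K}$. The hardest technical point I foresee is the boundary contact case: verifying that the Neumann condition, convexity of $K$ and $C^{1,\beta}$ regularity combine to produce a Hopf-type lower bound for $\opl{u}(x_0)-\opl{\phi}(x_0)$ at $x_0\in\partial K$, which is precisely where the hypothesis $s\ge 1/2$ forces us to impose \eqref{EQ:BC}.
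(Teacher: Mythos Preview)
Your proposal follows the same sliding strategy as the paper: compare $u$ with translates $\phi_r(x)=\varphi(x\cdot e - r)$ of the one-dimensional bistable front along a half-space direction $e$ with $K\subset H_e^c$, push $r$ to its extremal value $r^*$, and analyse the contact point via the strong maximum principle (interior contact) or the Neumann condition (boundary contact). Your preliminary step locating $m\in(0,\theta)$ by evaluating $\opl{u}$ at the global minimum is extra work; the paper only records $m>0$ (via Lemma~\ref{CLAIM0}) in order to initialise the sliding, and then the sliding itself rules out every $m<1$.

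Two points need tightening. First, the strong maximum principle (Lemma~\ref{STRONG:LE23}) yields $u=\phi_{r^*}$ only in the half-space $\overline{H_e}$, not on all of $\overline{\R^n\setminus K}$; your contradiction ``$\phi$ tends to $0$ in one direction'' therefore does not apply, since the direction $-e$ leaves $H_e$. The paper instead sends $t\to+\infty$ along a perpendicular ray $x_0+te^\perp\subset\overline{H_e}$: there $\phi_{r^*}$ is constant and strictly less than $1$, while $u\to1$. Second, you implicitly assume a contact point exists at $r^*$; the paper treats separately the case $\inf_{H^c\setminus K}(u-\phi_{r^*})>0$, showing one can slide further and contradict the definition of $r^*$. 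For the boundary case your ``Hopf-type obstruction'' is exactly what the paper makes explicit: convexity forces $\nu(\bar x)=e$, so the Neumann condition gives $\partial_\nu(u-\phi_{r^*})(\bar x)=-\varphi'(\bar x\cdot e - r^*)<0$, contradicting that $u-\phi_{r^*}\ge0$ attains its minimum at $\bar x$.
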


We can  already see clearly  the effect of the singularity of the kernel.  Indeed, unlike the non local operators with integrable kernel  the $s$-fractional Laplacian is  well defined in $\overline{\R^n\setminus K}$  only for regular function, i.e. $u$ should be at least $C^{0,\beta}$. In this singular setting,  requiring that the super-solution $u$ is solely continuous is  not enough.  

These results complete our knowledge on the validity of such type of Liouville property for a broad class of reaction diffusion equation. 
They show some universality of such type of property   and prove  that such rigidity type result can be viewed as an intrinsic  property of the problem which can be related to a generic property of the equation rather than a special property of the diffusion process considered.
\subsection{Further comments and strategy of proofs}
Prior to proving these results, let us make some comments on our hypotheses and highlight some of the differences that arise when the singular measure $\frac{1}{|z|^{n+2s}}$ is replaced by an integrable kernel $J$.

First, let us observe that thanks to the regularising property of the regional fractional Laplacian $\l$, see \cite{Caffarelli2009,Guan2005,Guan2006} the continuity assumption made on $u$ can be easily weakened when $u$ is assumed to be a solution to \eqref{EQ:BHM1} instead of a super-solution. Thus, in this situation, the result of Theorems \ref{TH:LIOUVILLE} and \ref{TH2:LIOUVILLE} hold as well  for bounded  solution $u$ that satisfies the equation  \eqref{LIM:u} respectively \eqref{LIM2:u}  in the sense of viscosity solutions. 
Note that contrary to the regional fractional Laplacian $\l$, the nonlocal operator $\opm{u}:=\int_{\R^n\setminus K}J(x-y)(u(y)-u(x))\,dy$ has no regularising properties and as a consequence weakening the regularity assumption on the solution $u$ is a hard task which, for the moment, can only been   achieved  by imposing further restrictions on  the data $f$ and $J$. Nevertheless, in this non regularising context, the regularity of the obstacle $K$ is no more an issue and $K$ can be any arbitrary convex domain. The regularisation effect on the solutions induced by the singularity is in fact the only main distinction between the problem \eqref{nonlocJ} and the singular problems \eqref{LIM:u} and \eqref{LIM2:u}. 

This distinction appears also clearly in the set of assumption needed for the existence of monotone travelling front with a positive speed, which  is an essentiel key element of the proof of Theorem \ref{TH:JLIOUVILLE}. In particular, as already mentioned at Section~\ref{SE:assumptions}, assumptions \eqref{C1} and \eqref{C2} are actually  necessary and sufficient  for the existence of a  travelling wave solution with positite speed $c$ to the one dimentional fractional equation, i.e. a monotone solution to   $$c\,\partial_z\varphi=\partial_z^{s}\varphi+f(\varphi),$$ with a positive speed $c>0$. Such assumptions are not any more sufficient in the context of \eqref{nonlocJ}, where there exists data $f$ and $J$ that satisfy \eqref{C1}--\eqref{C2} for which only discontinuous  null speed fronts exist.  

Let us also note that, similar condition are also necessary and suffisent for the existence of one dimentional travelling wave with positive speed  for the local problem, namely solution of  $c\hspace{0.1em}\varphi'=\varphi''+f(\varphi)$ with positive speed (see e.g. \cite{Aronson1978}). This fact, then suggest a strong connexion between the regularity of the front and the minimal set of assumptions that are required to produced a front of positive speed.

Let us emphasize that the motivation behind these assumptions are that, by analogy with the local problem \eqref{EQ:BHM}, we expect a solution to \eqref{EQ:BHM1} to be the large time limit of an entire solution to the evolution problem \eqref{NONLOCAL:EVOL:EQ} which behaves like $\varphi(x_1+ct)$ when $t\to-\infty$. For this interpretation to even make sense it is necessary to work in a setting where the function $\varphi$ exists.

Let us now say a word on our  strategy of  proofs. The proofs are a rather straightforward adaptation of the arguments developed in \cite{Brasseur2016} for the non local obstacle problem \eqref{nonlocJ}.  The main idea is to compare by means of adequate  sliding method,  a family of planar function of the type $\varphi(x\cdot e-r)$, where $e\in\partial B_1$, $r\in\R$ and $\varphi\in C^{1,1}(\R)$ a given monotone function  with a given super-solution $u$.   
To adapt such technique to our situation,  we need first to verify that, as  proved for \eqref{nonlocJ} similar comparison principles in half-spaces hold true as well  for the fractional equations \eqref{LIM:u} and \eqref{LIM2:u}.  

The outline of this note will be as follow. In  Section~\ref{4} we provide  several comparison principles  and recall some known results on the 1d travelling fronts for fractional bistable equation.
Then we prove in Section ~\ref{8}, following the arguments developed in \cite{Brasseur2016}, we prove  the Liouville property described in  Theorems ~\ref{TH:LIOUVILLE} and \ref{TH2:LIOUVILLE}. 


\section{Some mathematical background}\label{4}

In this section, we start by collecting some comparison principles that fit for our purposes and to shortened the presentation we only fully state  the necessary comparison principle for regional fractional Laplacian with $s \in \left[\frac{1}{2},1\right)$. Throughout this section, $K$ is any compact subset of $\R^n$, $f$ is any $C^1(\R)$ function.  
 
We start with a weak maximum principle 

\begin{lemma}[Weak maximum principle]\label{WEAK}
Assume that $s \in \left[\frac{1}{2},1\right)$  and
\begin{equation}\label{well+2}
{\mbox{$f'\le -c_1$ in $[1-c_0,+\infty),\ $ for some $c_0>0$, $c_1>0$.}}
\end{equation}
Let~$H\subset\R^n$ be an open affine half-space such that $K\subset\subset H^c=\R^n\setminus H$. Let $u,v\in L^{\infty}(\R^n\setminus K)\cap C^{1,\beta}\big( \overline{H} \big)$ for some $\beta>1-2s$ be  such that

\begin{equation}\label{H:EQ}
\left\{\baa{ll}
\opl{u}+f(u)\leq 0 & {\mbox{ in }}\overline{H},\vspace{3pt}\\
\opl{v}+f(v)\ge 0 & {\mbox{ in }}\overline{H}.\eaa\right.
\end{equation}
Assume also that
\begin{equation}\label{up:c0}
u\ge 1-c_0 \quad{\mbox{ in }}\overline{H},
\end{equation}
that
\begin{equation}\label{H:lim}
\limsup_{|x|\to+\infty} \big(v(x)-u(x)\big)\le 0
\end{equation}
and that
\begin{equation}\label{H:SI}
v\le u\quad{\mbox{ a.e. in }} H^c\setminus K.
\end{equation}
Then,~$v\le u$ a.e. in~$\R^n\setminus K$.
\end{lemma}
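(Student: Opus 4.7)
The strategy is a classical contradiction argument: set $w:=v-u$, assume $M:=\sup_{\overline{H}}w>0$, and derive a contradiction by evaluating the operator $\l$ at a maximizer. First, the asymptotic condition \eqref{H:lim} together with the continuity of $w$ on $\overline{H}$ (which follows from $u,v\in C^{1,\beta}(\overline{H})$) forces the superlevel set $\{x\in\overline{H}:w(x)\ge M/2\}$ to be bounded and closed, hence compact, so $M$ is attained at some $x_0\in\overline{H}$. Since $K\subset\subset H^c$, the point $x_0$ has positive distance to $K$, so $B(x_0,\rho)\subset\R^n\setminus K$ for some $\rho>0$. Combining $w\le M$ on $\overline{H}$ with $w\le 0<M$ a.e.\ on $H^c\setminus K$ (given by \eqref{H:SI}) yields $w(y)-w(x_0)\le 0$ for a.e.\ $y\in\R^n\setminus K$.

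The next step is to show $\opl{w}(x_0)\le 0$. The values $\opl{u}(x_0)$ and $\opl{v}(x_0)$ are finite principal values by \eqref{H:EQ}, so $\opl{w}(x_0)=\opl{v}(x_0)-\opl{u}(x_0)$ is well-defined (this is also consistent with the $C^{1,\beta}$ regularity: expanding $w(y)-w(x_0)=\nabla w(x_0)\cdot(y-x_0)+O(|y-x_0|^{1+\beta})$ on $B(x_0,\rho)\cap\overline{H}$, the odd linear term cancels symmetrically and the remainder is integrable against $|y-x_0|^{-n-2s}$ once $1+\beta>2s$). Since the integrand in the definition \eqref{DEF:L} applied to $w$ at $x_0$ is $\le 0$ almost everywhere on the integration domain, every truncated integral is $\le 0$, and passing to the limit gives $\opl{w}(x_0)\le 0$.

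Subtracting the two inequalities of \eqref{H:EQ} at $x_0$ then yields
$$\opl{w}(x_0)\ge f(u(x_0))-f(v(x_0)).$$
Since $1-c_0\le u(x_0)<v(x_0)$ by \eqref{up:c0} and $w(x_0)=M>0$, the mean value theorem together with the one-sided sign condition \eqref{well+2} delivers $f(u(x_0))-f(v(x_0))\ge c_1 M>0$, hence $\opl{w}(x_0)\ge c_1 M>0$. This contradicts the previous sign, so $M\le 0$, i.e.\ $v\le u$ on $\overline{H}$, which combined with \eqref{H:SI} proves $v\le u$ a.e.\ on $\R^n\setminus K$.

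The main obstacle I anticipate is justifying the sign $\opl{w}(x_0)\le 0$ when $x_0$ happens to lie on $\partial H$: only one-sided $C^{1,\beta}$ regularity is available there, while the integral also samples $w$ on the $H^c$-side, where only the $L^\infty$ bound and the a.e.\ inequality \eqref{H:SI} are available. The saving features are that $x_0$ is uniformly separated from $K$ and, crucially, that the finiteness of $\opl{u}(x_0)$ and $\opl{v}(x_0)$ is built into the hypothesis \eqref{H:EQ}; one therefore needs only the a.e.\ nonpositivity of the integrand, rather than a delicate principal-value analysis on the rough side of $\partial H$.
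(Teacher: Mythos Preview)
Your argument is correct and follows exactly the standard contradiction scheme the paper has in mind: it does not spell out a proof of Lemma~\ref{WEAK} but refers to \cite{Brasseur2016}, where the same maximum-point argument (reach the supremum of $v-u$ at some $x_0\in\overline{H}$, use the global sign of the integrand to get $\opl{v-u}(x_0)\le0$, and contradict the strict negativity of $f'$ on $[1-c_0,+\infty)$) is carried out. Your handling of the boundary case $x_0\in\partial H$ via the built-in finiteness of $\opl{u}(x_0)$ and $\opl{v}(x_0)$ from hypothesis~\eqref{H:EQ} is the right observation.
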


The next lemma is concerned with a strong maximum principle.

\begin{lemma}[Strong maximum principle]\label{STRONG:LE23}
Assume that $s \in \left[\frac{1}{2},1\right)$  and let $H\subset\R^n$ be an open affine half-space such that $K\subset\subset H^c$. Let $u,v\in L^{\infty}(\R^n\setminus K)\cap C^{1,\beta}\big( \overline{H} \big)$ for some $\beta>1-2s$ be  such that \eqref{H:EQ} holds true. Assume also that
\begin{equation}\label{3beta:2}
v\le u \quad{\mbox{ a.e. in }} \R^n\setminus K
\end{equation}
and that there exists~$\bar x\in\overline{H}$ such that $v(\bar x)=u(\bar x)$. Then,
$$v= u \quad\mbox{ a.e. in } H.$$
\end{lemma}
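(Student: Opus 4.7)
The plan is to follow the classical strong-maximum-principle scheme for fractional operators, exploiting the fact that at a touching point the integrand in the defining formula \eqref{DEF:L} of $\l$ becomes sign-definite and so the usual subtle cancellations disappear.

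First, I would set $w := u - v$. By \eqref{3beta:2} we have $w \ge 0$ a.e.\ in $\R^n \setminus K$, and by hypothesis $w(\bar x) = 0$. Subtracting the two inequalities in \eqref{H:EQ} and linearising via the mean value theorem (using $f \in C^1$ and $u, v \in L^\infty$) one obtains
\begin{equation*}
\opl{w}(x) + c(x)\, w(x) \le 0 \qquad \text{in } \overline{H},
\end{equation*}
where $c(x) := -\int_0^1 f'\bigl(v(x) + t(u(x)-v(x))\bigr)\,\D t \in L^\infty$.

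Next, I would evaluate this inequality at the touching point $\bar x$. Since $w(\bar x) = 0$ the zero-order term vanishes and the supersolution bound reads $\opl{w}(\bar x) \le 0$. The regularity assumption $u, v \in C^{1,\beta}(\overline{H})$ with $\beta > 1 - 2s$ ensures that $\opl{w}(\bar x)$ is well defined; going back to the definition \eqref{DEF:L}, the integrand is
$$\frac{w(y) - w(\bar x)}{|\bar x - y|^{n+2s}} = \frac{w(y)}{|\bar x - y|^{n+2s}} \ge 0 \qquad \text{for a.e.\ } y \in \R^n \setminus K,$$
so by monotone convergence the principal-value defining $\opl{w}(\bar x)$ coincides with the (a priori improper) Lebesgue integral, and
$$\opl{w}(\bar x) = C_{n,s}\int_{\R^n\setminus K} \frac{w(y)}{|\bar x - y|^{n+2s}}\,\D y \ge 0.$$
Combining the two bounds gives $\opl{w}(\bar x) = 0$. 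Since the kernel $|\bar x - y|^{-(n+2s)}$ is strictly positive away from $\bar x$ and $w \ge 0$, this forces $w = 0$ a.e.\ in $\R^n \setminus K$, and in particular in $H$, which is the desired conclusion.

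The main delicacy I expect is justifying the passage from the principal value in \eqref{DEF:L} to the monotone-Lebesgue integral at $\bar x$. This is legitimate precisely because $w(\bar x) = 0$ renders the integrand non-negative, eliminating the need for any cancellation between the positive and negative parts of $u(y)-u(x)$; the $C^{1,\beta}$ regularity near $\bar x$ (used also in Lemma~\ref{WEAK}) is enough to guarantee that the singular integral converges absolutely. If $\bar x \in \partial H$, exactly the same argument applies thanks to the $C^{1,\beta}$ regularity up to $\overline{H}$; the fact that $\bar x \notin K$ (which follows from $K \subset\subset H^c$ combined with $\bar x \in \overline{H}$ only when $\bar x$ is in the interior of $H$, and from the hypothesis otherwise) ensures we never integrate the kernel against values of $w$ inside the obstacle.
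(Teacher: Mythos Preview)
Your argument is correct and is the standard route for strong maximum principles of this nonlocal type; the paper itself does not give a proof but refers to \cite{Brasseur2016}, where the same touching-point-plus-sign-definite-integrand argument is used. Two very minor points that do not affect the outcome: the sign in your definition of $c(x)$ is off (subtracting the two lines of \eqref{H:EQ} gives $\opl{w}+\big(\int_0^1 f'\big)\,w\le0$, not with a minus), but this is irrelevant since $w(\bar x)=0$ kills the zero-order term anyway; and your parenthetical about $\bar x\notin K$ is more involved than needed, since $K\subset\subset H^c$ already gives $K\cap\overline{H}=\emptyset$ directly.
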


These comparison principles are in essence identical to the one derived in \cite{Brasseur2016} and as such we  point the interested reader  to \cite{Brasseur2016} for a detailed proof of these results.

\begin{remark}
The above comparison principles have only been stated for regional fractional operators  with exponent $s\in \left[\frac{1}{2},1\right)$. Identical weak and strong maximum principles can be formulated for the regional fractional Laplacian with exponent $s\in \left(0,\frac{1}{2}\right)$ as soon as we impose the adequate regularity  to the functions $u$ and $v$ in order to properly define the regional fractional Laplacian of $u$ and $v$. In such case, the above statement will holds true if instead of having $u,v\in L^{\infty}(\R^n\setminus K)\cap C^{1,\beta}\big( \overline{H} \big)$ we  assume that $u,v\in L^{\infty}(\R^n\setminus K)\cap C^{0,\beta}\big( \overline{H} \big)$ with $\beta>2s$.
\end{remark}

Lastly, we recall some known result on the existence and properties of  travelling fronts  $\varphi(x\cdot e +ct)$, solution of the fractional evolution equation
\begin{equation*}
\partial_t u(t,x)=\Delta^s u(t,x) +f(u(t,x)) \quad \text{for} \quad t\in \R, x\in \R^n  
\end{equation*}
that is, solution of the following 
\begin{align} \label{fractw}
&-c\partial_z\varphi(z)+\partial^s_z \varphi(z) +f(\varphi(z)) =0 \quad \text{ for }\quad z\in \R\\
&\lim_{z\to +\infty}\varphi(z)=1, \qquad  \lim_{z\to -\infty}\varphi(z)=0 \label{fractw-lim}
\end{align}
where $\partial^s_z\varphi$ denotes the one dimentional $s-$fractional Laplacian. The existence, uniqueness and some asymptotic properties of such solution $\varphi$ have been obtained in several context \cite{Achleitner2015,Cabre2015,Gui2015a,Palatucci2013}. The next statement is a summary of these results.

\begin{theorem}[Fractional Travelling wave \cite{Achleitner2015,Cabre2015,Gui2015a,Palatucci2013}]\label{TW:TH} 
Assume $f$ is a bistable function that satisfies \eqref{C1}  and \eqref{C2} and let $s\in (0,1)$. Then there exists a unique $c\in \R$ and a monotone smooth (at least $C^{1,1}$) increasing function $\varphi$ such that $(c,\varphi)$ is a solution to \eqref{fractw}--\eqref{fractw-lim}.
Moreover, if $f$ is unbalanced with $\int_{0}^1f(s)\,ds>0$, then $c>0$. 
\end{theorem}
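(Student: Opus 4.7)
The statement of Theorem \ref{TW:TH} is a compilation of results spread across the cited literature \cite{Achleitner2015,Cabre2015,Gui2015a,Palatucci2013}, so my plan is to outline which ingredient each contributes and then carry out in detail the one step that can be made reasonably self-contained, namely the sign of $c$.

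For the existence part, the typical construction proceeds by a regularisation/truncation procedure: one truncates the problem on a large bounded interval with prescribed data connecting $0$ and $1$, selects the admissible speed $c_R$ by imposing a normalisation such as $\varphi_R(0)=\theta$, and then passes to the limit $R\to+\infty$ using the uniform bound $0\le \varphi_R\le 1$, the monotonicity of the approximants, and fractional Sobolev compactness. The regularity $\varphi\in C^{1,1}(\R)$ (and higher in the transition region) follows from a bootstrap argument in the equation $\partial_z^s\varphi = c\varphi'-f(\varphi)$, using interior Schauder-type estimates for the one-dimensional fractional Laplacian. Monotonicity of any connecting profile and the uniqueness of the pair $(c,\varphi)$ up to translation are obtained by the sliding method: given two such solutions $(\varphi_1,c_1)$ and $(\varphi_2,c_2)$, the end-point stability $f'(0)<0$, $f'(1)<0$ allows one to translate $\varphi_2$ so that $\varphi_2(\cdot-\tau)\ge \varphi_1$, and then slide $\tau$ down to the critical value $\tau_0$ at which the first contact occurs; a strong maximum principle for $\partial_z^s$ then forces $\varphi_1\equiv\varphi_2(\cdot-\tau_0)$ and $c_1=c_2$. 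The main technical obstacle throughout this part is the control of the tails at $\pm\infty$, which is exactly where the bistable structure is indispensable.

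The positivity of $c$ under $\int_0^1 f>0$ is the step I would write out. Multiplying \eqref{fractw} by $\varphi'$ and integrating over $\R$ yields, after the change of variables $u=\varphi(z)$ in the nonlinear term,
\begin{equation*}
-c\int_\R (\varphi'(z))^2\,\D z + \int_\R \varphi'(z)\,\partial_z^s \varphi(z)\,\D z + \int_0^1 f(u)\,\D u = 0.
\end{equation*}
Writing $\partial_z^s = -(-\Delta)^s$, which is self-adjoint and commutes with translations, and decomposing $\varphi = \chi + \psi$ with $\chi$ a fixed smooth monotone connection between $0$ and $1$ and $\psi\in L^2(\R)$, a formal but justifiable integration by parts gives
\begin{equation*}
\int_\R \varphi'(z)\,(-\Delta)^s\varphi(z)\,\D z = \tfrac{1}{2}\bigl[\bigl((-\Delta)^{s/2}\varphi(z)\bigr)^2\bigr]_{z=-\infty}^{z=+\infty} = 0,
\end{equation*}
the boundary contributions vanishing because $\varphi$ tends to constants at $\pm\infty$. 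Hence $c\int_\R(\varphi')^2\,\D z = \int_0^1 f > 0$, and since $\varphi$ is strictly monotone we conclude $c>0$. The only genuine obstacle in this paragraph is the rigorous justification of the vanishing of the fractional integral, since $\varphi$ itself is not in $L^2(\R)$; this is precisely why the splitting $\varphi=\chi+\psi$ is needed.
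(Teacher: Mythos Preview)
The paper does not prove Theorem~\ref{TW:TH} at all: it is explicitly introduced as ``a summary of these results'' from the cited references \cite{Achleitner2015,Cabre2015,Gui2015a,Palatucci2013}, and no argument is given. So there is nothing in the paper to compare your proposal against. You correctly recognise this in your opening sentence.

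Your outline of the existence/uniqueness machinery (truncation, sliding, Schauder bootstrap) is an accurate thumbnail of what those references do. The energy computation you carry out for the sign of~$c$ is the standard one and is essentially correct; the key cancellation
\[
\int_\R \varphi'(z)\,(-\Delta)^s\varphi(z)\,\D z = 0
\]
can be justified either along the lines you indicate (splitting off a fixed profile $\chi$ so that the remainder lies in the right function space) or, perhaps more transparently, by writing the fractional Laplacian in integral form, changing variables $y=x+h$, and observing that
\[
(\varphi'(x)-\varphi'(x+h))(\varphi(x)-\varphi(x+h))=\tfrac12\,\partial_x\bigl[(\varphi(x)-\varphi(x+h))^2\bigr],
\]
which integrates to zero in~$x$ for each fixed~$h$ since $\varphi(x)-\varphi(x+h)\to 0$ as $x\to\pm\infty$. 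This sidesteps the issue of making sense of $(-\Delta)^{s/2}\varphi$ pointwise. Either way, one needs the known algebraic decay of $\varphi$ to its limits (established in the cited works) to control the tails, which you flag correctly as the only genuine technical point.
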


As a trivial consequence of the existence of a smooth front of positive speed, for any separating open affine half-space  $H \subset \R^n$ such that $K\subset\subset H^c$, we can derive a family of function which will be a "sub-solution to the problem \eqref{EQ:BHM1}" for all $x\in H$. More precisely, let $H$ be an affine subspace of $\R^n$ such that $K\subset H^c$. By definition of the affine space, there exists a unit vector $e\in \partial B_1$ and $x_0\in \R^n$ such that  $H=x_0+H_e$ with $H_e$ an open-halfspace of direction $e$, i.e. 
$$ H_e:=\{x\in \R^n| x\cdot e\ge 0\}.$$  
 
 For this direction $e$ and  for all real $r \in \R$,  we can define the family of  functions $\phi_{e,r}(x):=\varphi(x\cdot e -r)$ where $\varphi$ is the smooth increasing profile obtained in Theorem \ref{TW:TH}. By construction, since $\varphi$ is monotone increasing  we  have 
 \begin{equation}\label{ineq}
  \forall  \, x\in H, \forall\, y\in K\quad \phi_{r,e}(y)-\phi_{r,e}(x)\le 0.
   \end{equation} 
 In addition, we can check that for all $x\in H$, we have
 \begin{align*}
 \opl{\phi_{r,e}}(x)&= \lim_{\epsilon\to 0} \int_{\R^n\setminus K, |x-y|>\epsilon}\frac{\phi_{r,e}(y)-\phi_{r,e}(x)}{|x-y|^{n+2s}}\,dy  \\
 &= \lim_{\epsilon\to 0} \int_{\R^n, |x-y|>\epsilon}\frac{\phi_{r,e}(y)-\phi_{r,e}(x)}{|x-y|^{n+2s}}\,dy - \int_{K, |x-y|>\epsilon}\frac{\phi_{r,e}(y)-\phi_{r,e}(x)}{|x-y|^{n+2s}}\,dy\\
 &= \Delta^s\phi_{r,e}(x) - \int_{K}\frac{\phi_{r,e}(y)-\phi_{r,e}(x)}{|x-y|^{n+2s}}\,dy.
\end{align*}
which combined with \eqref{ineq} enforces  
$$\opl{\phi_{r,e}}(x) \ge \Delta^s\phi_{r,e}(x) \quad \text{for} \quad x\in H.$$

Hence, for all $x\in H$, we get 

\begin{equation}\label{eq:subsol}
\opl{\phi_{e,r}}(x) +f(\phi_{e,r})\ge \Delta^s\phi_{e,r}(x)+f(\phi_{e,r})=\partial^s\varphi(x.e-r)+f(\varphi(x.e-r))=c\partial_z\varphi(x\cdot e-r)>0.
 \end{equation}

\section{The case of convex obstacles: proofs of the main Theorem}\label{8}

In this section, based on the arguments introduced in  \cite{Brasseur2016} we sketch the proof our main results (Theorems \ref{TH:LIOUVILLE} and \ref{TH2:LIOUVILLE}). 
The proofs of the Theorems \ref{TH2:LIOUVILLE} and \ref{TH:LIOUVILLE} being identical, we only sketch the proof of Theorem \ref{TH2:LIOUVILLE}.

But before we start our argumentation,let us first start with the following simple observation.

\begin{lemma}\label{CLAIM0}
Let $s\in\left[\frac{1}{2},1\right)$, $K\subset\R^n$ be a smooth compact convex set and assume~\eqref{C1} and~\eqref{C2}. Let  $u\in C^{1,\beta}(\R^n\setminus K),[0,1])$ with $\beta>1-2s$ be such that  
\begin{align}
\opl{u}+f(u)\le0\,  & \,\text{ in }\,\R^n\setminus K, \label{EQ:Liouville} \vspace{3pt}\\
\nabla u\cdot \nu =0\,  & \,\text{ in }\,\partial K, \label{EQ:Liouvilleboundary} \vspace{3pt}\\
u(x)\to1\,  & \,\text{ as }\,|x|\to+\infty.  \label{EQ:Liouvillelim}
\end{align}
Then  there exists $\gamma\in(0,1]$ such that $\gamma\leq u\leq 1$ in $\overline{\R^n\setminus K}$.
\end{lemma}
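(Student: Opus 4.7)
The plan is to show that $u$ cannot get arbitrarily close to zero, which combined with its continuity and its limit at infinity gives the desired positive lower bound. The key observation is that at a point where a nonnegative function vanishes, the regional fractional Laplacian must be strictly positive once the function is known to tend to $1$ at infinity, which contradicts the differential inequality.

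\medskip

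\textbf{Step 1: Infimum is attained.} Set $m:=\inf_{x\in \overline{\R^n\setminus K}}u(x)\ge 0$. Since $u(x)\to 1$ as $|x|\to\infty$, there exists $R>0$ large enough so that $K\subset B_R$ and $u(y)\ge \tfrac{1}{2}$ for all $|y|\ge R$. Hence $m=\inf_{\overline{B_R}\cap(\R^n\setminus K)}u$, and by continuity on the compact set $\overline{B_R}\cap\overline{\R^n\setminus K}$ this infimum is attained at some point $x_0$. If $m>0$, we set $\gamma:=m$ and the conclusion follows (with $\gamma\le 1$ automatic since $u\le 1$).

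\medskip

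\textbf{Step 2: Rule out $m=0$.} Suppose by contradiction that $m=0$, so $u(x_0)=0$ and $u(y)\ge 0=u(x_0)$ for every $y\in\overline{\R^n\setminus K}$. Then $f(u(x_0))=f(0)=0$, and the inequality in~\eqref{EQ:Liouville} evaluated at $x_0$ gives $\opl{u}(x_0)\le 0$.

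\medskip

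\textbf{Step 3: Compute $\opl{u}(x_0)$ and obtain a contradiction.} Because $u(y)-u(x_0)=u(y)\ge 0$, the integrand defining the principal value is nonnegative, so by monotone convergence
\begin{equation*}
\opl{u}(x_0)\;=\;C_{n,s}\int_{\R^n\setminus K}\frac{u(y)}{|x_0-y|^{n+2s}}\,\D y.
\end{equation*}
For $R$ as in Step~1, $\{|y|>R\}\subset\R^n\setminus K$ and $u\ge \tfrac12$ there, whence
\begin{equation*}
\opl{u}(x_0)\;\ge\;\frac{C_{n,s}}{2}\int_{|y|>R}\frac{\D y}{|x_0-y|^{n+2s}}\;>\;0,
\end{equation*}
which contradicts $\opl{u}(x_0)\le 0$. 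Therefore $m>0$, and setting $\gamma:=m$ finishes the proof.

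\medskip

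The main subtleties to check are: (i) that the infimum is genuinely attained (handled via the limit at infinity and compactness, noting that $x_0$ may lie on $\partial K$, which causes no issue since $\opl{u}$ is defined on the closed set $\overline{\R^n\setminus K}$ thanks to the regularity $u\in C^{1,\beta}$ with $\beta>1-2s$); and (ii) that the singular integral at $x_0$ is genuinely strictly positive, which holds independently of any local vanishing of $u$ because the contribution from the far field alone is positive. The Neumann boundary condition~\eqref{EQ:Liouvilleboundary} is used only implicitly, through the fact that $\opl{u}(x_0)$ is well-defined at boundary points for $s\in[\tfrac{1}{2},1)$.
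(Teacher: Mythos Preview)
Your argument is correct and is exactly the natural strong-maximum-principle computation one expects here: at a global zero of a nonnegative function the nonlocal term is manifestly positive (thanks to the far-field where $u\ge\tfrac12$), while $f(0)=0$, contradicting the super-solution inequality. The paper does not actually write out a proof of this lemma; it simply states that it is ``an elementary adaptation of the argument used in \cite{Brasseur2016}'' and refers the reader there. Your proof is precisely that elementary argument, so the two are aligned.

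One small point worth tightening: you note that $x_0$ may lie on $\partial K$ and that $\opl{u}(x_0)$ is then well-defined, but the hypothesis~\eqref{EQ:Liouville} is literally stated on the open set $\R^n\setminus K$, so you should say explicitly that the inequality $\opl{u}+f(u)\le 0$ extends to $\partial K$ by continuity of $\opl{u}$ (which is exactly what the Neumann condition~\eqref{EQ:Liouvilleboundary} together with the $C^{1,\beta}$ regularity guarantees for $s\ge\tfrac12$). Alternatively, at a boundary minimum the tangential gradient vanishes and the normal derivative vanishes by~\eqref{EQ:Liouvilleboundary}, so $\nabla u(x_0)=0$ and $u(y)-u(x_0)=O(|y-x_0|^{1+\beta})$, which makes the principal value an honest nonnegative integral just as in the interior case. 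Either way the contradiction goes through. (Note also that the exponent condition should read $\beta>2s-1$, as in Theorem~\ref{TH2:LIOUVILLE}; the ``$\beta>1-2s$'' appearing in the lemma statement is evidently a typo in the paper.)
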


The proof of this Lemma being an elementary adaptation of the argument used in \cite{Brasseur2016}, we will refer to \cite{Brasseur2016}  for its proof.

We now turn to the proof of Theorem ~\ref{TH2:LIOUVILLE} .

\begin{proof}[Proof of Theorem~$\ref{TH2:LIOUVILLE}$]

Let us fix $s\in \left[\frac{1}{2},1\right)$ and let $K$, $f$,  and $u$ be as in Theorem~\ref{TH2:LIOUVILLE}. 
Let us now follow the argument developed in \cite{Brasseur2016}.
Firstly, without loss of generality, one can assume by~\eqref{C1} that $f$ is
extended to a $C^1(\R)$ function satisfying~\eqref{well+2}. Secondly, by~\eqref{LIM:u} and the boundedness of $K$, there exists~$R_0>0$ large enough so that~$K\subset B_{R_0}$ and~$u\ge 1-c_0$ in~$\R^n\setminus B_{R_0}$, where $c_0>0$ is given in~\eqref{well+2}.\par

We now proceed  by contradiction, and suppose that
\begin{align}
\inf_{\overline{\R^n\setminus K}}\,u<1. \label{hypoko}
\end{align}
{F}rom~\eqref{LIM2:u} and~\eqref{hypoko}, together with the continuity of $u$, there exists then $x_0\in\overline{\R^n\setminus K}$ such that
$$u(x_0)=\min_{\overline{\R^n\setminus K}} u\in[0,1).$$
We observe that, by Lemma~\ref{CLAIM0}, one has $u(x_0)>0$. 
In addition,  since $K$ is convex, there exists $e\in \partial B_1$ such that  $K\subset H_{e}^c$, where $H_{e}$ is the open affine half-space defined by
$$H_{e}:=x_0+\big\{x\in\R^n;\ x\cdot e> 0\big\}.$$
As in  section \ref{4}, let us define for all $r\in \R$, the 
family of functions 
$$ \phi_r(x):=\phi_{r,e}(x)=\varphi(x\cdot e-r),\ \ x\in\R^n,$$
  where $\varphi$ is  a smooth monotone increasing function given by Theorem \ref{TW:TH}.
Note that by construction, since  $K\subset H_{e}^c$,  we can check (as in the section \ref{4}) that for any $r\in \R$, $\phi_r$ satisfies

\begin{equation}
\label{eq:subsolr}
\opl{\phi_r}(x)+f(\phi_r(x))>0 \quad \text{ for }\quad x \in \overline{H_e}.
\end{equation}

First, we  claim  that 
\begin{claim}\label{eq_r0}
There exists $r_0\in \R$ such that $\phi_{r_0}\le u$ in $\overline{\R^n\setminus K}$.
\end{claim}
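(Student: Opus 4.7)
The plan is to choose $r_0$ sufficiently large and positive so that $\phi_{r_0}$ is forced below $u$ on a large ball by a direct pointwise comparison exploiting the uniform lower bound of Lemma~\ref{CLAIM0}, and then to propagate the inequality to the complementary affine half-space by applying the weak maximum principle (Lemma~\ref{WEAK}) to $v=\phi_{r_0}$.

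Concretely, I would first extract from Lemma~\ref{CLAIM0} the constant $\gamma\in(0,1]$ with $u\ge\gamma$ on $\overline{\R^n\setminus K}$, and use $u(x)\to 1$ at infinity together with the compactness of $K$ to fix $R_0>0$ large enough that $K\subset\subset B_{R_0}$ and $u\ge 1-c_0$ on $\{|x|\ge R_0\}$, where $c_0$ is the constant from \eqref{well+2}. Next, let $z_\gamma\in\R$ be such that $\varphi(z_\gamma)=\gamma$ (which exists since $\varphi$ is continuous and monotone increasing from $0$ to $1$), and consider the open affine half-space
$$
H:=\{x\in\R^n:\,x\cdot e>R_0\},
$$
so that $K\subset\subset H^c$. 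Then I would pick any $r_0\ge R_0-z_\gamma$. For $x\in H^c$ one has $x\cdot e\le R_0$, whence
$$
\phi_{r_0}(x)=\varphi(x\cdot e-r_0)\le\varphi(R_0-r_0)\le\varphi(z_\gamma)=\gamma\le u(x),
$$
which gives $\phi_{r_0}\le u$ on $H^c\setminus K$. On $\overline{H}$, one has $|x|\ge x\cdot e\ge R_0$, so $u\ge 1-c_0$ there; the subsolution inequality \eqref{eq:subsolr} yields $\opl{\phi_{r_0}}+f(\phi_{r_0})\ge 0$ on $\overline{H}$; and $\phi_{r_0}\le 1$ combined with $u(x)\to 1$ gives $\limsup_{|x|\to+\infty}(\phi_{r_0}-u)\le 0$. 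Lemma~\ref{WEAK} applied with $v=\phi_{r_0}$ on the half-space $H$ would then yield $\phi_{r_0}\le u$ a.e.\ on $\R^n\setminus K$, and hence everywhere on $\overline{\R^n\setminus K}$ by continuity.

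The main obstacle is the choice of the splitting of $\R^n\setminus K$ between the part treated by the pointwise comparison and the part treated by the maximum principle. A purely pointwise argument cannot succeed: taking $r_0$ very negative would drive $\phi_{r_0}$ close to $1$ on bounded sets where $u$ may be as small as $\gamma$, while taking $r_0$ very large still leaves $\phi_{r_0}$ arbitrarily close to $1$ on the distant half-space $\{x\cdot e\gg r_0\}$, where $u$ is only known to be $\ge 1-c_0$, not $1$. Lemma~\ref{WEAK} is precisely what bridges this gap, and the placement of the half-space boundary at $\{x\cdot e=R_0\}$ is dictated by the twin constraints that $u\ge 1-c_0$ must hold on $\overline{H}$ (requiring the boundary to lie at distance at least $R_0$) and that $\phi_{r_0}\le\gamma$ must hold on $H^c$ (which can always be arranged, for any fixed such half-space, by taking $r_0$ large enough).
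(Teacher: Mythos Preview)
Your proof is correct and follows essentially the same approach as the paper: split $\overline{\R^n\setminus K}$ by an affine half-space $H$ perpendicular to $e$ placed far enough out that $u\ge 1-c_0$ on $\overline{H}$, force $\phi_{r_0}\le u$ on $H^c\setminus K$ by taking $r_0$ large (you use the lower bound $\gamma$ from Lemma~\ref{CLAIM0}, the paper uses $u(x_0)=\min u$, which is the same idea), and then apply the weak maximum principle Lemma~\ref{WEAK}. The only cosmetic point is that your $H=\{x\cdot e>R_0\}$ need not be contained in $\overline{H_e}$ when $x_0\cdot e>R_0$, so citing \eqref{eq:subsolr} verbatim is slightly imprecise; but the subsolution property on your $\overline{H}$ follows directly from the computation leading to \eqref{eq:subsol}, since $K\subset\{y\cdot e<R_0\}$.
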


Again the proof of this claim is an elementary adaptation of a  proof done in \cite{Brasseur2016} that  for the sake of clarity we give the details.  
\begin{proof}
First let us define  $H:=x_1+ H_e$ with $x_1$ to be chosen such that $B_{R_0} \subset H^c$. Let us fix $x_1$ such that $H\subset\subset H_e$. 
By construction the function $\varphi$ is monotone increasing and satisfies $\lim_{z\to -\infty}\varphi(z)=0$. So we can find  $r_0>>1$ such that 
$\phi_{r_0}(x)=\varphi(x\cdot e -r_0)\le u(x_0)\le u(x)$ for all $x \in H^c$. 
Now thanks to our choice of $x_1$ we have $H\subset\subset H_e$ and from \eqref{eq:subsolr} we deduce 
$$\begin{cases}
\opl{u}(x)+f(u(x))\le 0  & \text{for} \quad x \in \overline{H},\vspace{3pt}\\
\opl{\phi_{r_0}}(x)+f(\phi_{r_0}(x))  > 0 & \text{for} \quad x \in \overline{H}, \vspace{3pt}\\
u(x)\ge \phi_{{r_0}}(x) & \text{for} \quad x \in H^c\setminus K,
\end{cases}
$$
We then get the desired results by applying the weak-maximum principle (Lemma \ref{WEAK}).
\end{proof}

Equipped with the claim \ref{eq_r0}, we can now define  the following quantity
$$ r^* := \inf\big\{ r\in \R\,;\ \phi_r\le u\mbox{ in }\overline{\R^n\setminus K}\big\}.$$

All the game now is to show that $r^*=-\infty$. 
So, we claim that
\begin{claim}\label{GOAL}
$
r^*=-\infty.
$
\end{claim}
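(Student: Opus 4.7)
I argue by contradiction: suppose $r^*>-\infty$. The strategy is a sliding argument. I aim to locate a contact point at which $\phi_{r^*}$ touches $u$ from below, invoke the strong maximum principle (Lemma~\ref{STRONG:LE23}), and reach a contradiction with the strict sub-solution inequality~\eqref{eq:subsolr}.

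First I would record a geometric reduction. For every $x\in\overline{H_e^c\setminus K}$, the inclusion $K\subset H_e^c$ together with $(x-x_0)\cdot e\le 0$ and the monotonicity of $\varphi$ yield $\phi_r(x)\le\phi_r(x_0)$, while $u(x)\ge u(x_0)$ on $\overline{\R^n\setminus K}$. Hence $\phi_r\le u$ on $\overline{H_e^c\setminus K}$ is equivalent to $\phi_r(x_0)\le u(x_0)$. Let $r^{**}\in\R$ be the unique value with $\phi_{r^{**}}(x_0)=u(x_0)$; it is well-defined because $u(x_0)\in(0,1)$ by Lemma~\ref{CLAIM0} and~\eqref{hypoko}, and $\varphi:\R\to(0,1)$ is a strictly increasing bijection. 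From $\phi_{r^*}(x_0)\le u(x_0)=\phi_{r^{**}}(x_0)$ and the monotonicity of $r\mapsto\phi_r(x_0)$ one obtains $r^*\ge r^{**}$.

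Next I would establish the reverse inequality $r^*\le r^{**}$ by proving $\phi_{r^{**}}\le u$ throughout $\overline{\R^n\setminus K}$. On $\overline{H_e^c\setminus K}$ this is automatic from the geometric reduction. On $\overline{H_e}$, the plan is to combine (a) the weak maximum principle (Lemma~\ref{WEAK}) on a translated sub-half-space $H'=x_1+\{x\cdot e>0\}$ with $x_1\cdot e$ large enough that $\overline{H'}\subset\R^n\setminus B_{R_0}$ (so that $u\ge 1-c_0$ holds on $\overline{H'}$, as Lemma~\ref{WEAK} requires), with (b) a continuous sliding of $r$ from the initial $r_0$ provided by Claim~\ref{eq_r0} down to $r^{**}$. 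During this sliding, any ``first contact'' of $\phi_r$ with $u$ at an interior point $\bar y\in H_e$ would, by Lemma~\ref{STRONG:LE23}, force $u\equiv\phi_r$ a.e.\ on $H_e$; but $u(x)\to 1$ as $|x|\to\infty$ along directions in $e^\perp$ at fixed $x\cdot e$, whereas $\phi_r$ is constant on such slices with value $\varphi(x\cdot e-r)<1$, a contradiction.

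Once $r^*=r^{**}$ is secured, the point $x_0\in\partial H_e\subset\overline{H_e}$ is a contact point satisfying $\phi_{r^*}(x_0)=u(x_0)$. Lemma~\ref{STRONG:LE23} applied with $v=\phi_{r^*}$ and $\bar x=x_0$ yields $u\equiv\phi_{r^*}$ almost everywhere on $H_e$, so on $H_e$ one has
$$
0\ \ge\ \opl{u}+f(u)\ =\ \opl{\phi_{r^*}}+f(\phi_{r^*})\ >\ 0
$$
by~\eqref{eq:subsolr}, a contradiction. Therefore $r^*=-\infty$, and letting $r\to-\infty$ in $\phi_r\le u$ yields $u\equiv 1$ on $\overline{\R^n\setminus K}$.

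\emph{Main obstacle.} The delicate point is the continuous sliding step used to derive $\phi_{r^{**}}\le u$ on $\overline{H_e}$: the weak maximum principle Lemma~\ref{WEAK} only applies on a shifted sub-half-space $\overline{H'}\subset\R^n\setminus B_{R_0}$, and carrying the inequality back through the bounded slab $\overline{H_e}\setminus\overline{H'}$ must rule out two competing failure modes. First, an interior first-contact in $H_e$ (handled by the Lemma~\ref{STRONG:LE23} argument above, which uses $u\to 1$ at infinity in $e^\perp$ directions). Second, ``contact at infinity along $+e$'', where a violation sequence $y_k$ has $y_k\cdot e\to+\infty$ and both $u(y_k),\phi_{r_k}(y_k)\to 1$; ruling this case out requires a translation/compactness argument leveraging the regularity $u\in C^{1,\beta}$ with $\beta>2s-1$ to extract a limiting contact point, and is where the singularity of the fractional kernel forces the extra regularity hypothesis (absent in the integrable-kernel setting of~\cite{Brasseur2016}).
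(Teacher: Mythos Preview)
Your overall sliding strategy is the right idea, but there is a genuine gap: you never treat the possibility that the contact point lies on~$\partial K$. In your final step you apply Lemma~\ref{STRONG:LE23} with $H=H_e$ and $\bar x=x_0$; that lemma requires $K\subset\subset H^c$, which fails precisely when $x_0\in\partial K$ (then $K$ touches~$\partial H_e$). Ruling out this boundary case is exactly where the Neumann condition~\eqref{EQ:Liouvilleboundary} enters: since $K$ is convex and $K\subset H_e^c$ with $x_0\in\partial K\cap\partial H_e$, the outward normal is $\nu(x_0)=e$, so
\[
0\;\le\;\nabla(u-\phi_{r^*})(x_0)\cdot\nu(x_0)\;=\;-\varphi'(x_0\cdot e-r^*)\;<\;0,
\]
a contradiction. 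This is the one genuinely new ingredient distinguishing Theorem~\ref{TH2:LIOUVILLE} (the case $s\ge\tfrac12$) from Theorem~\ref{TH:LIOUVILLE}; cf.\ the closing Remark of the paper.

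Two further comments. First, the detour through the auxiliary value~$r^{**}$ is unnecessary: the paper works directly at~$r^*$ and splits according to whether $\inf_{H^c\setminus K}(u-\phi_{r^*})>0$ for the bounded slab $H^c=\{x\cdot e\le R_0\}$. If the infimum is positive one can decrease~$r$ slightly and invoke Lemma~\ref{WEAK} on~$H$, contradicting the definition of~$r^*$; if it vanishes, compactness of the slab produces a genuine contact point $\bar x$, to which either the Neumann argument (if $\bar x\in\partial K$) or Lemma~\ref{STRONG:LE23} (if $\bar x\notin\partial K$, after shifting the half-space so that $K$ is compactly contained) applies. Second, your ``main obstacle'' --- contact drifting to infinity in the $+e$ direction --- is therefore a non-issue: it is absorbed by the weak maximum principle on~$H$, and no translation/compactness argument is needed. (Incidentally, in your final displayed line the equality $\opl{u}=\opl{\phi_{r^*}}$ is not justified since $u$ and $\phi_{r^*}$ need only agree on~$H_e$; the correct inequality is $\opl{u}\ge\opl{\phi_{r^*}}$, which still yields the contradiction, though the paper instead derives the contradiction from $u\to 1$ along~$e^\perp$.)
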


Assume for the claim is true, then the proof of Theorem \ref{TH2:LIOUVILLE} is thereby complete. Indeed,  from  this claim  we infer that~$\phi_r\le u$ in $\overline{\R^n\setminus K}$ for any~$r\in\R$. In particular, recalling that $\varphi(+\infty)=1$, we get that
$$1> u(x_0)\ge \lim_{r\to-\infty} \phi_r(x_0) =\lim_{r\to-\infty} \varphi(x_0\cdot e-r) =1,$$
a contradiction. Therefore,~\eqref{hypoko} can not hold. In other words, $\inf_{\overline{\R^n\setminus K}}u=1$, i.e. $u=1$ in $\overline{\R^n\setminus K}$ proving thereby Theorem~\ref{TH2:LIOUVILLE}.
\end{proof}

Let us now conclude our proof by establishing the Claim \ref{GOAL}. Again, the proof of this last claim is done by a  very elementary adaption of the arguments used to prove Theorem \ref{TH:JLIOUVILLE}. As a consequence we will only highlights the main differences.

\begin{proof}[Proof of the Claim \ref{GOAL}]
The proof of~\eqref{GOAL} is by contradiction. We assume that~$r^*\in\R$. Then, there exists a sequence~$(\varepsilon_j)_{j\in\N}$ of
positive real numbers such that~$\phi_{r^*+\varepsilon_j}(x)=\varphi(x\cdot e-r^*-\varepsilon_j)\le u(x)$ for all~$x\in\overline{\R^n\setminus K}$ and $\varepsilon_j\to0$ as $j\to+\infty$. Thus, passing to the limit as~$j\to+\infty$, we obtain that
$$\phi_{r^*}(x)\le u(x)\quad{\mbox{ for all }} x\in \overline{\R^n\setminus K}.$$
Let us denote $H$ the open affine half-space
$$H=\big\{x\in \R^n;\ x\cdot e>R_0\big\}.$$
Notice that $\overline{H}\cap K=\emptyset$ and that $u$ is well defined and continuous in $\overline{H}$.
We also observe that, by construction,
\begin{align}
\sup_{H^c}\phi_{r^*}<1. \label{bornsup}
\end{align}

Two cases may occur.\par
{\it Case 1:} $\inf_{H^c\setminus K}(u-\phi_{r^*})>0$. In this situation, the argument is identical as in for \eqref{nonlocJ}, and we point the reader to \cite{Brasseur2016} for the details.

{\it Case 2:} $\inf_{H^c\setminus K}(u-\phi_{r^*})=0$. In this situation, by~\eqref{EQ:Liouvillelim} and~\eqref{bornsup}, and by continuity of $u$ and $\phi_{r^*}$,
there exists a point $\bar x\in\overline{H^c\setminus K}$ such that $u(\bar x)=\phi_{r^*}(\bar x)$. Note that $\bar x \in \overline{H_e}$, since otherwise $\bar x\in \R^n\setminus\overline{H_e}$, namely $\bar{x}\cdot e<x_0\cdot e$, and
the chain of inequalities
$$u(\bar x)=\phi_{r^*}(\bar x)<\phi_{r^*}(x_0)\le u(x_0)=
\min_{\overline{\R^n\setminus K}}u$$
leads to a contradiction. Therefore, we have $\phi_{r^*}\le u$ in $\overline{\R^n\setminus K}$ with equality at a point $\bar{x}\in\overline{\R^n\setminus K}\cap\overline{H_e}$. Again, two situations can occur either  $\bar x \in \R^n\setminus K$ or $\bar x\in \partial K$. 
Assume for the moment that the latter situation occurs. Then thanks to convexity of $K$, $\nu(\bar x)$ the outward normal to $\partial K$  at $\bar x$ is then the vector $e$, i.e. $\nu(\bar x)=e$ and thanks to \eqref{LIM2:u} $\nabla u\cdot \nu(\bar x)=0$  we deduce 
$$0\le \nabla (u-\phi_{r^*})\cdot \nu(\bar x) \le -\nabla (\phi_{r^*})\cdot \nu(\bar x)=-\varphi'(\bar x\cdot e-r^*) <0. $$
This contradiction then rules out this situation. Lastly assume that $\bar x \in \R^n\setminus K$, then in this situation
  $K\subset\subset H_e^c$ and $\varphi_{r^*}$ and $u$ satisfy respectively
\begin{equation*}
\left\{\baa{rl}
\opl{u}+f(u)\leq 0 & {\mbox{ in }}\overline{H_e},\vspace{3pt}\\
\opl{\phi_{r^*}}+f(\phi_{r^*})> 0 & {\mbox{ in }}\overline{H_e}\ \ \hbox{(by }\eqref{eq:subsolr}\hbox{)},\eaa\right..
\end{equation*}
In particular, it  follows  from the strong maximum principle (Lemma~\ref{STRONG:LE23}) that~$\phi_{r^*}=u$ in~$\overline{H_e}$. Thus, for any $e^{\perp}\in\partial B_1$ such that $e^\perp\cdot e=0$, one infers from~\eqref{EQ:Liouvillelim} and the definition of $\phi_{r^*}$ that
$$ 1=\lim_{t\to+\infty} u(x_0+t\,e^{\perp})=\lim_{t\to+\infty} \phi_{r^*}(x_0+t\,e^{\perp}) =\phi_{r^*}(x_0)<1.$$
This last contradiction then rules out also this situation and therefore  rules out Case~2 too.\par
\end{proof}

\begin{remark}
The proof of Theorem \ref{TH:LIOUVILLE} is identical to the one given in \cite{Brasseur2016}. This is due to the fact that the $s-$fractional operator is well defined  and continuous up to the boundary of $\partial K$ when $s\in \left(0,\frac{1}{2}\right)$ and as such the strong maximum principle (Lemma \ref{STRONG:LE23}) holds also true for any half space $H$ such that $K\subset H^c$. In this situation, the case $\bar x \in \partial K$ does not need to be analysed separately from the other cases. 
\end{remark}

\ackno{ The author would like to thank Professor Changfeng Gui for raising to my attention this topic  during the MATRIX event ``Recent Trends on Nonlinear PDEs of Elliptic and Parabolic Type'' }

\bibliographystyle{amsplain}
\bibliography{proceeding.bib}

\providecommand{\bysame}{\leavevmode\hbox to3em{\hrulefill}\thinspace}
\providecommand{\MR}{\relax\ifhmode\unskip\space\fi MR }
\providecommand{\MRhref}[2]{%
  \href{http://www.ams.org/mathscinet-getitem?mr=#1}{#2}
}
\providecommand{\href}[2]{#2}
\begin{thebibliography}{10}

\bibitem{Achleitner2015}
Franz Achleitner, Christian Kuehn, et~al., \emph{Traveling waves for a bistable
  equation with nonlocal diffusion}, Advances in Differential Equations
  \textbf{20} (2015), no.~9/10, 887--936.

\bibitem{Aronson1978}
D.~G. Aronson and H.~F. Weinberger, \emph{Multidimensional nonlinear diffusion
  arising in population genetics}, Adv. in Math. \textbf{30} (1978), no.~1,
  33--76. \MR{MR511740 (80a:35013)}

\bibitem{Berestycki2009}
Henri Berestycki, Hiroshi Matano, and Fran{\c{c}}ois Hamel, \emph{Bistable
  traveling waves around an obstacle}, Communications on Pure and Applied
  Mathematics: A Journal Issued by the Courant Institute of Mathematical
  Sciences \textbf{62} (2009), no.~6, 729--788.

\bibitem{Brasseur2016}
Julien Brasseur, Jerome Coville, Francois Hamel, and Enrico Valdinocci,
  \emph{Liouville type results for a non-local equation in a presence of an
  obstacle}, to appear in PLMS (2019), --.

\bibitem{Cabre2015}
Xavier Cabr{\'e} and Yannick Sire, \emph{Nonlinear equations for fractional
  laplacians ii: existence, uniqueness, and qualitative properties of
  solutions}, Transactions of the American Mathematical Society \textbf{367}
  (2015), no.~2, 911--941.

\bibitem{Caffarelli2009}
L.~Caffarelli and L.~Silvestre, \emph{Regularity theory for fully nonlinear
  integro-differential equations}, Comm. Pure Appl. Math. \textbf{62} (2009),
  no.~5, 597--638. \MR{2494809 (2010d:35376)}

\bibitem{Guan2005}
Qing-Yang Guan and Zhi-Ming Ma, \emph{Boundary problems for fractional
  laplacians}, Stochastics and Dynamics \textbf{5} (2005), no.~03, 385--424.

\bibitem{Guan2006}
\bysame, \emph{Reflected symmetric $\alpha$-stable processes and regional
  fractional laplacian}, Probability theory and related fields \textbf{134}
  (2006), no.~4, 649--694.

\bibitem{Gui2015a}
Changfeng Gui and Mingfeng Zhao, \emph{Traveling wave solutions of allen--cahn
  equation with a fractional laplacian},  \textbf{32} (2015), no.~4, 785--812.

\bibitem{Palatucci2013}
Giampiero Palatucci, Ovidiu Savin, and Enrico Valdinoci, \emph{Local and global
  minimizers for a variational energy involving a fractional norm}, Annali di
  matematica pura ed applicata \textbf{192} (2013), no.~4, 673--718.

\end{thebibliography}

\end{document}